\documentclass[11pt,openany]{article}
\usepackage[utf8]{inputenc}
\usepackage[english]{babel}
\usepackage[margin=0.75in]{geometry}
\usepackage{color}
\usepackage{enumerate}
\usepackage{amsmath, amsthm, amssymb, amsfonts}
\usepackage{mathtools}
\usepackage[all,cmtip]{xy}
\usepackage{todonotes}
\usepackage{hyperref}
\usepackage{todonotes}
\allowdisplaybreaks
\newtheorem{theorem}{Theorem}[section]

\newtheorem{proposition}[theorem]{Proposition}
\theoremstyle{definition}
\newtheorem{definition}[theorem]{Definition}
\newtheorem{example}[theorem]{Example}

\newtheorem{calc'}{Calculation}

\newtheorem*{claim*}{Claim}

\newtheorem{case'}{Case}
\newtheorem{case''}{Case}

\newtheorem*{conjecture*}{Conjecture}

\newenvironment{theorem*}[2][Theorem]{\begin{trivlist}
\item[\hskip \labelsep {\bfseries #1}\hskip \labelsep {\bfseries #2}]}{\end{trivlist}}
\newenvironment{lemma*}[2][Lemma]{\begin{trivlist}
\item[\hskip \labelsep {\bfseries #1}\hskip \labelsep {\bfseries #2}]}{\end{trivlist}}
\newenvironment{corollary*}[2][Corollary]{\begin{trivlist}
\item[\hskip \labelsep {\bfseries #1}\hskip \labelsep {\bfseries #2}]}{\end{trivlist}}
\begin{document}
\title{\texorpdfstring{Zeta functions of projective hypersurfaces with ADE Singularities}{Zeta functions of projective hypersurfaces with ADE Singularities}}
\author{Matthew Cheung \\ Mathematics, UCI, Irvine, CA, USA\\ {\color{blue}\href{mailto:matthc4@uci.edu}{matthc4@uci.edu}}} 
\date{}

\maketitle

\begin{abstract}

Given a hypersurface, $X$, prime $p$, the zeta function is a generating function for the number of $\mathbb{F}_{p}$ rational points of $X$. Scott Stetson and Vladimir Baranovsky provided an algorithm with Mathematica for the ordinary double point case. In this paper, I explain the necessary steps to extend Scott Stetson's and Vladimir Baranovsky's algorithm to hypersurfaces with ADE singularities in 3-dimensional projective space. In the process of doing so, I characterized the Jacobian ideal by being annihilated by a set of differential operators.

\end{abstract}

\tableofcontents

\bigskip

\section{Introduction}\label{sec0}
In the past, the zeta functions for smooth surfaces have been computed. One method involves the deformation method given in Lauder [11] and [12]. A second method involves computing the action of Frobenius and reducing in cohomology--see Costa, Harvey, and Kedlaya [2] and Rybakov [14]. A Sage code for computing smooth surfaces is given by Sperber and Voight [18]. Stetson and Baranovsky [17] extended zeta function computation to hypersurfaces with ordinary double points, and Scott Stetson created a non-automated code in Mathematica computing the zeta functions of hypersurfaces with ordinary double points. I will generalize the theory from Stetson and Baranovsky [16] and give an algorithm for computing the zeta functions of hypersurfaces with ADE singularities. Furthermore, I will provide a non-automated Sage code for computing such zeta functions. \\

\begin{definition}\label{Definition 1.1} Given a projective hypersurface $X$ defined by equation $f(w:x:y:z)$ in $\mathbb{P}^{3}$, the affine cone over $X$ is $f(w:x:y:z)$ viewed as a function $f(w:x:y:z)$ over $\mathbb{C}^{4}$. \end{definition} \begin{definition}. Let $K$ be a field or $\mathbb{Z}_{p}$. A polynomial over $K^{3}$ has an ADE singularity at the origin if locally after a formal analytic change of coordinates (i.e., change of coordinates is a formal power series with coefficients in $K$), the function is one of the following \\
	\begin{align*}	
		&A_{n}: w^{2}+x^{2}+y^{n+1}=0 \\
		&D_{n}: w^{2}+y(x^{2}+y^{n-2})=0\\
		&E_{6}: w^{2}+x^{3}+y^{4}=0 \\
		&E_{7}: w^{2}+x(x^{2}+y^{3})=0\\
		&E_{8}: w^{2}+x^{3}+y^{5}=0.
	\end{align*}
\end{definition}
Let $p$ be a prime and $q= p^{n}$ for some $n$. Another definition in the case $K=\mathbb{F}_{p}$ for ADE singularities over finite fields is given in Greuel [8].  
 Let $N_{r} =\vert X(\mathbb{F}_{p^{r}}) \vert$. Let $X$ be a hypersurface in $\mathbb{P}^{3}$ with ADE singularities over $\mathbb{F}_{p}$.  Then the zeta function of a variety $X$ over $\mathbb{F}_{q}$ is a generating function for $N_{r}$ given by 
\[ Z(X,t)=Z(t)=\exp \sum_{r=1}^{\infty}\frac{N_{r}t^{r}}{r}.\]

Let $U=\mathbb{P}^{3} - X$. Then $U$ is affine and smooth.  From Theorem 3.1 of Gerkmann [7], as we are working over $\mathbb{P}^{3}$,
$$Z(X,t)=\frac{1}{(1-t)(1-qt)(1-q^{2}t)p(t)}$$
$$p(t)=\text{det}(1-tq^{3}\text{Frob}_{q}^{-1}\vert H_{\text{rig}}^{2}(U))$$
where $H_{\text{rig}}^{2}(U)$ is the rigid cohomology group of $U$ and $Frob_{q}^{-1}$ is the action of the inverse of lifted Frobenius on the rigid cohomology groups. See Gerkmann [7] for a review of rigid cohomology. But for the purposes of this paper, we will be applying de Rham cohomology to find the basis elements of the rigid cohomology groups; hence, the need to fully understand rigid cohomology is not necessary.

\begin{definition}\label{Definition 1.3}. Let $f \in \mathbb{F}_{p}[w,x,y,z]$ with ADE singularities at $s_{1},...,s_{n}$. A lift of $f$, $\tilde{f}$, to $\mathbb{Z}_{p}[w,x,y,z]$ is equisingular if 
\begin{enumerate}
\item $\mathbb{Z}_{p}[w,x,y,z]/(\tilde{f_{w}},\tilde{f_{x}},\tilde{f_{y}},\tilde{f_{z}})$ has no $p$ torsion parts. Here, $\tilde{f_{w}},\tilde{f_{x}},\tilde{f_{y}},\tilde{f_{z}}$ represent the lifted partials.
\end{enumerate}
\end{definition}
The issue we want to avoid is for example the case of an $A_{4}$ singularity given by standard equation $uv=t^{5}$ and our prime is $p=5$. The partials are given by $v,u,5t^{4}$. Over $\mathbb{F}_{5}$, the third term is $0$. This will lead to issues in lifting the ideal $(v,u,5t^{4})$. We wish to avoid these primes. Hence, by allowing only equisingular lifts, we avoid these situations as $\mathbb{Z}_{p}[u,v,t]/(v,u,5t^{4})$ has a $5$ torsion. The term $t^{4}$ is not $0$ but $5t^{4}$ is $0$. \\

Let $f\in \mathbb{F}_{p}[x,y,z]$ be a homogeneous polynomial with ADE singularities over $\mathbb{F}_{p}$. Let $\tilde{f}$ be an equisingular lift. Let $X$ be the zero set of $f$ over $\mathbb{F}_{p}$ and $X_{\mathbb{Z}_{p}}$ be the zero set of $\tilde{f}$ over  $\mathbb{Z}_{p}$. Let $U$ and $U_{\mathbb{Z}_{p}}$ be the complements of the spaces respectively. From Hironaka [10], we can perform a sequence of blow ups at the singular points in $\mathbb{P}^{3}$ such that $X_{\mathbb{Z}_{p}}$ is resolved to a smooth divisor with normal crossings. Then from Corollary 2.6 of Baldassarri and Chiarellotto [1],
\[ H_{\text{rig}}^{i}(U)\cong H_{\text{dR}}^{i}(U_{\mathbb{Z}_{p}}) \ \ \ \ \ \ 0\leq i \leq 2\text{dim}(U)\]

Hence, we can focus on studying the de Rham cohomology of the complement.  
\section{Preliminaries}\label{sec1}

A differential $n$-form in $k^{m+1}$ is a form 
$\omega=\sum_{I}c_{I}dx_{i_{1}}\wedge...\wedge dx_{i_{n}}$
where $I=(i_{1},...,i_{n})$ and $c_{I}\in k[x_{0},...,x_{m}]$. Let $\Omega_{m}^{n}$ be the space of $n$-forms of weight $m$ where if $\omega=x_{0}^{a_{0}}...x_{s}^{a_{s}}dx_{i_{1}}\wedge ... \wedge dx_{i_{n}}$, 
\[ |\omega|= a_{0}+...+a_{s}+n.\]

From Chapter 6 of Dimca [3], every differential $k$-form $\omega$ for $k>0$ on $U$ is written as $\omega= \frac{\Delta(\gamma)}{f^{s}}$, where $\Delta$ is the contraction of the Euler vector field $\sum x_{i}\frac{\partial}{\partial x_{i}}$ and $\gamma \in \Omega_{sN}^{p+1}$ where $N=\text{deg}(f)$.  A simple calculation shows that we can express the differential of the form $\omega$ as $d\omega= \frac{\Delta(\delta)}{f^{s+1}}$ for some $\delta \in \Omega_{(s+1)N}^{p+2}$. One can calculate that $d\omega = -\frac{\Delta(fd\gamma - sdf \wedge \gamma)}{f^{s+1}}$.

\begin{definition}For $k\geq 0$, we define our differential form $d_{f} \colon \Omega^{k}\longrightarrow \Omega^{k+1}$ to be
	$$d_{f}(\omega)=fd\omega - \frac{|\omega|}{N}df\wedge \omega $$
	for homogeneous differential form $\omega$.
\end{definition}

In other words, the homogeneity condition allows us to forget denominators and work with polynomials. However, our differential is no longer the usual one since the differential is now in the form of the Koszul differential plus the de Rham differential. 

\begin{definition} Let $(B,d',d'')$ be the double complex given by $B^{s,t}=\Omega_{tN}^{s+t+1}$ where $d'=d$ and $d''(\omega)= -|\omega|N^{-1}df\wedge \omega$ for a homogeneous differential form $\omega$. \end{definition}
\begin{definition} Let $(\text{Tot}(B)^{*},D_{f})$ be the total complex given by 
	$ \text{Tot}(B)^{m}=\bigoplus_{s+t=m}B^{s,t}$ with filtration $F^{s}\text{Tot}(B)^{m}=\bigoplus_{k\geq s}B^{k,m-k}$ where $D_{f}=d'+d''$. \end{definition} 
Saito [15] shows if $m=\text{dim}_{\mathbb{C}}f^{-1}(0)$, then $H^{k}(K_{f}^{*})=0$ for $k\leq n-m$, where $\text{dim}_{\mathbb{C}}f^{-1}(0)$ is the dimension of the singular locus, $n$ corresponds to $\mathbb{P}^{n}$, which for our case is $3$, and $H^{k}(K_{f}^{*})$ is the cohomology in the vertical direction with respect to the $df\wedge$ differential. In the smooth case, $m=0$ so only the top cohomology group $H^{n+1}(K_{f}^{*})_{tN}$ is nonzero. As only one diagonal remains on the $E_{1}$ page, the de Rham differential is trivial; hence, in the smooth case, the spectral sequence degenerates at the $E_{1}$ page and converges to the cohomology of the total complex. \\

In the singular case, $m=1$ so our case involves the top and second to top cohomology of the Koszul complex. As there are two diagonals on the $E_{1}$ page, the de Rham differential need not be trivial. Let $\mu(X)$ be the global Milnor number of our hypersurface X. For the purposes of this paper, a type $A_{k}, D_{k},E_{k}$ has Milnor number $k$. The global Milnor number is defined to be the sum of all Milnor numbers. By Corollary 1.5 of Dimca and Sticlaru [6],  $H^{n}(K_{f})_{m}=\mu(X)$ for $m\geq 3(N-2)$. (The general formula is $n(N-2)$ if we are working in $\mathcal{P}^{n}$ instead of $\mathbb{P}^{3}$.)  Hence, the dimensions of the diagonals eventually stabilize to the global Milnor number. Furthermore, Theorem 2 of Saito [16] gives that for weighted homogeneous equations, the spectral sequence degenerates on the $E_{2}$ page. Equation 2.10 above Example 2.1 of Dimca and Sticlaru [5] gives that for weighted homogeneous equations, all nonzero terms on the $E_{2}$ page lie inside the first quadrant, not including the $x$-axis and $y$-axis. Using this, I constructed a Sage code computing the basis elements on the $E_{2}$ page. The code mainly involves constructing the matrix for the two differentials and using linear algebra to compute the quotient groups.\\

The next step is computing Frobenius using the basis elements on the $E_{2}$ page. Given a basis element on the $E_{2}$ page, we consider the action of Frobenius on this element. For the remainder of the text, let $\Omega=dw\wedge dx \wedge dy \wedge dz$.  Let $\frac{h\Omega}{f^{\ell}}$ be one of the basis elements. Then the action of the lifted Frobenius, $\hat{F}$, is given by 
$$\hat{F}\left(\frac{h\Omega}{f^{\ell}}\right)=p^{3}\frac{h(w^{p},x^{p},y^{p},z^{p})\prod_{i=0}^{3}x_{i}^{p-1}\Omega}{f^{p\ell}}\left(\sum_{k=0}^{\infty}p^{k}\frac{\alpha_{k}g^{k}}{f^{pk}}\right), $$ 
where $\alpha_{k}$ is the $k$-th coefficient of the power series expansion $(1-t)^{-\ell}= \alpha_{0}+\alpha_{1}t+\alpha_{2}t^{2}+...$ , 
$x_{0}=w, x_{1}=x, x_{2}=y, x_{3}=z$, and $pg=f(w,x,y,z)^{p}-f(w^{p},x^{p},y^{p},z^{p})$. Given each term in the sum, the goal is to express the image of Frobenius as a linear combination of the basis elements on the $E_{2}$ page. \\

Applying Frobenius to each basis element and reducing in cohomology, the action of Frobenius is represented as a square matrix. Note that since this is an infinite sum, we need to truncate the sum to the first $N$ summands. Gerkmann [7] goes over how far one needs to truncate, but for the purposes of Sage calculation, each additional term in the sum generally gives us accuracy up to the next power of $p$ for the examples I am doing. To be more accurate, in the examples considered in this paper, the $p$-adic expansion of the numbers in the truncated Frobenius matrix converge to the $p$-adic expansion of the actual value of the Frobenius matrix. If $k=0$ gives accuracy up to $p^{2}$, then going up to $k=1$ gives accuracy up to $p^{3}$. If one of the entries is for example $-5$, then an example of what one might see will be
\begin{align*}
	k=0 & \rightarrow 5+5^{2}+2\cdot 5^{3}+3\cdot5^{4}+... \\
	k=0, k=1 &\rightarrow 5+4\cdot 5^{2} + 3\cdot 5^{3}+2\cdot 5^{4}+... \\
	k=0, k=1, k=2 &\rightarrow 5+4\cdot 5^{2} + 4\cdot 5^{3} +\cdot 2\cdot5^{4}+...
\end{align*}
Continuing on, one will see the values converge to the $5+4\cdot 5^{2}+4\cdot 5^{3}+4\cdot 5^{4}+...,$ the p-adic expansion of $-5$. \\

While the goal is clear, there are several issues that come into play. The first issue is that reducing in cohomology takes a while if we go the direct route which is explained in the coming example. The second issue is that reduction involves using a Gr\"{o}bner basis which may get large. The third issue is that the image of Frobenius is high in degree which ties into the first and second issue. Although my code does not fix the Grobner basis issue, my code does fix the other two problems.

\begin{example} Let $f$ be degree $3$ with $p=5$ and $h_{1},h_{2},h_{3}$ be our basis elements on the $E_{2}$ page of degree 2. Then by homogeneity, $\ell=2$ in the Frobenius equation above. Now to make things simple, let us consider $k=0$. The sum goes away and on the denominator we have $f^{p\ell}=f^{10}$. Since $f$ is cubic, the denominator is degree 30. Hence, excluding the weight of $\Omega$, our image is of degree $30-4=26$. Let $u$ be the degree 26 polynomial. As this space is $0$ on $E_{2}$ page, $u$ is in the Jacobian ideal, provided we subtract off appropriate elements in the image of the de Rham differential. Let $P_{k}\Omega_{j}$ be $j-$forms whose coefficients are polynomials of degree $k$.
	
	We will need to compute the image of the $df \wedge$ map from $P_{24}\Omega_{3}$ to $P_{26}\Omega_{3}$ and quotient the whole space $P_{26}\Omega_{3}$ by the image. Note computing the quotient is in order to find the basis elements as this will give the space on the top diagonal of the $E_{1}$ page. In terms of reducing the cohomology, record the image as a matrix.  Next, we need to compute the kernel of the $df \wedge$ map $P_{27}\Omega_{3}\rightarrow P_{29}\Omega_{4}$ and the image of the $df\wedge$ map from $P_{25}\Omega_{2}\rightarrow P_{27}\Omega_{3}$ and then compute the quotient to get the space on the subdiagonal on the $E_{1}$ page. After computing the quotient, apply de Rham differential to go from $P_{27}\Omega_{3}$ to $P_{26}\Omega_{4}$. Record this image of de rham as a matrix. Combine the two recorded matrix into one matrix. Since $u$ is degree 26 and the space of degree 26 polynomials on the top diagonal is 0 on the $E_{2}$ page, when we combine the two matrices recorded above into one matrix, we get a matrix with full rank. Express $u$ as a vector and append to the matrix of full rank. Now apply linear algebra to solve for $u$ in terms of the image of $df \wedge$. Take the values for the $df \wedge$ portion. Now we can reduce in cohomology and reduce the degree of $u$ by $\text{deg}(f)$ because Koszul differential reduces the degree by $\text{deg}(f)-1$ since the partials have one degree less, and then applying de Rham differential reduces the degree by 1. We will apply the process again.  The issue here is that the space of polynomials of degree $25$ to $29$ is large. Computing the kernel and image may be simple but computing the quotient may take a long run time and after the long computation, we only reduce the degree of the image by $\text{deg}(f)$. Furthermore, this is just the $k=0$ term of the summation and only for one of our basis elements.
\end{example}

To work with lower powers in general, we decide to use the left inverse of Frobenius. Remke [13] gave in Notation 12 the formula for the left inverse of Frobenius. By this, we mean that Frobenius has a left inverse on the level of varieties. Let $\hat{F}^{-1}$ be the corresponding operator on level of cohomology. Let $\psi: A^{\dagger}\rightarrow A^{\dagger}$ be the $Q_{q}$ linear operator given by 
$$\psi(\prod x_{i}^{a_{i}})=
\begin{cases}
	\prod x_{i}^{a_{i}/p} & \text{if} \  a_{i}\equiv 0 (\text{mod} \  p) \ \  \forall i \\
	0  & \text{otherwise}
\end{cases}
$$

Note that since the action of Frobenius is taking $p$-th powers, the inverse should involve taking $p$-th roots. Taking $x_{0}=w, x_{1}=x, x_{2}=y, x_{3}=z$ and $\Delta=f(w,x,y,z)^{p}-f(w^{p},x^{p},y^{p},z^{p})$, the action of the inverse is given by
$$\hat{F}^{-1}\left(\frac{h\Omega}{f^{\ell}}\right)=\left(\sum_{k} \frac{\psi(f^{p-\ell}h\prod_{i=0}^{3}x_{i}\Delta^{i})}{f^{k+1}} \right)\frac{\Omega}{p^{3}\prod_{i=0}^{3}x_{i}}.$$
Note this fixes one of the issues given in Example 1. In Example 1, for $k=0$, the summation gives a degree $26$ image for the coefficient of the 4 form on the numerator. The image coefficient of the 4 form on the numerator for $k=1$ for the inverse is only degree 2. The image coefficient of the 4-form on the numerator for $k=9$ of the inverse is of degree 26 which is the degree in the original Frobenius image for $k=0$. Working with low degrees for high values of $k$ makes computation slightly easier. The issue about computing the matrices and quotients still remains.

\section{Results}\label{sec2}

Up until now, in terms of coding, the fact that the singularities are of type ADE have not been used other than for the isomorphism between de Rham and rigid cohomology. The results on spectral sequences uses the assumption that the singularities were isolated weighted homogeneous. Recall from Dimca and Sticlaru [6], $H^{n}(K_{f})_{m}=\mu(X)$ for $m\geq 3(N-2)$. We call the values of $m$ such that $m\geq 3(N-2)$ the stable range.  Now, since the top diagonal for a smooth hypersurface on the $E_{1}$ page lies only in the first quadrant and the Euler characteristic is independent of whether the hypersurface is smooth or singular, we can conclude that $H^{n+1}(K_{f})_{m}=\mu(X)$. Moreover, the $E_{2}$ page is $0$ in this stable range. \\

Let $h$ be the image of Frobenius in the stable range. Suppose $h\in P_{k}\Omega_{4}$. Then find a basis for $P_{k+1}\Omega_{3}$. As we are in the stable range, there are $\mu(X)$ basis elements which we name as $\beta_{1},...,\beta_{\mu(x)}$. How we find these basis elements in high degree will be explained later. Applying the de Rham differential, $d\beta_{1}, ...,d\beta_{\mu(X)}$ lie in $P_{k}\Omega_{4}$. As the $E_{2}$ page is $0$ on the stable range, lifting $h$ back to $E_{0}$ gives $h=a_{1} d\beta_{1}+...+a_{\mu(X)}d\beta_{\mu(X)}+f_{w}h_{1}+f_{x}h_{2}+f_{y}h_{3}+f_{z}h_{4}$. Note that evaluating the equation at the singular point is an operator that annihilates the partials and gives us an equation to solve for $a_{1},...,a_{\mu(X)}$. Stetson and Baranovsky [17] showed showed that if all singularities are type $A_{1}$, we can evaluate at the singular points and can solve for the variables given. \\

To see this, suppose there exists one  $A_{1}$ singularity. Then from above, each piece of the subdiagonal is 1- dimensional meaning $g-a_{1}d\beta_{1} = b_{1}f_{w}+b_{2}f_{x}+b_{3}f_{y}+b_{4}f_{z}$. To find $a_{1}$, evaluate both sides at the singular point. Then the right hand side is $0$ by definition of a singular point. Note, plugging in any other point will give an equation but the issue is that $b_{1},b_{2},b_{3}, b_{4}$ are unknown. Therefore, we need to find operators that annihilate the right hand side to solve for $a_{1}$. Similarly, suppose there are $k$ $A_{1}$ singularity. Then $g-a_{1}d\beta_{1}-...-a_{k}d\beta_{k}= b_{1}f_{w}+b_{2}f_{x}+b_{3}f_{y}+b_{4}f_{z}$. We need to find $a_{1},...,a_{k}$; so $k$ equations are needed. Evaluation at each of the singular points will give $k$ equations. The equations will be linearly independent. In fact, I show linear independence for the general ADE case later in the paper. Hence, for $A_{1}$ singularities, finding the coefficients for de rham is simple.	\\

Suppose our hypersurface has one $ A_{2}$ singularity. Then $g-a_{1}d\beta_{1}-a_{2}d\beta_{2}= b_{1}f_{w}+b_{2}f_{x}+b_{3}f_{y}+b_{4}f_{z}$. We need to find $a_{1}$ and $a_{2}$ but evaluating at the singular point only gives $1$ equation. Where will the second equation come from? In this case, the normal form of an $A_{2}$ singularity is $uv=t^{3}$. The partials are given by $v,u,3t^{2}$. Along with evaluation at the origin, the operator given by $\frac{\partial}{\partial t}\vert _{(0,0,0)}$ annihilates the Jacobian. The idea is to transfer this operator to the original coordinates to obtain the second operator for the second equation. Hence, I establish an equality between the space annihilated by specific operators depending on our ADE singularities and the Jacobian ideal for polynomials with degree in the stable range \ref{Theorem 3.5}.

\subsection{ADE Operators}
Before we continue, in the case that there are two singularities in the same affine open set, we need an algebraic way of working locally around the singularity. 

\begin{definition} Let $M$ be a finite dimensional module over a polynomial ring $R$ in several variables over $\mathbb{C}$. Let $\tilde{R}$ be the power series ring in the variables of $R$. We define the formal completion of $M$ as $M\otimes_{R}\tilde{R}$. \end{definition} 

\begin{definition} A module over a polynomial ring in variables $(x,y,z)$ is supported at $(\alpha, \beta, \gamma)$ if $\exists N$ such that $\forall$ $k\geq N$, $(x-\alpha)^{k}M= (y-\beta)^{k}M= (z-\gamma)^{k}M=0$. \end{definition} 

\begin{proposition}\label{Proposition 3.3} Suppose $M$ is a finite dimensional module over the polynomial ring supported at the origin. Let $\tilde{M}$ be the formal completion of $M$. Then $M\cong \tilde{M}$ as $R$-modules. \end{proposition}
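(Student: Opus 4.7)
The plan is to use the support-at-origin hypothesis to reduce the comparison between $M$ and its formal completion to a calculation inside a single Artinian quotient, where polynomials and power series become indistinguishable. The candidate isomorphism is the natural map $M \to \tilde M$ sending $m \mapsto m \otimes 1$, so the task is to show it is a bijection.

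First I would convert the pointwise support condition into an ideal-theoretic one. By the definition of support at the origin and finite dimensionality, there exists $N$ with $x^N M = y^N M = z^N M = 0$. Any monomial in $(x,y,z)$ of total degree at least $3N$ must contain some variable to a power at least $N$, so $\mathfrak{m}^{3N} M = 0$, where $\mathfrak{m} = (x,y,z)$. Thus $M$ descends to a module over the Artinian quotient $R/\mathfrak{m}^{3N}$.

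Next I would compare the relevant quotients of $R$ and $\tilde R$. The inclusion $R \hookrightarrow \tilde R$ induces a $\mathbb{C}$-algebra map $R/\mathfrak{m}^{3N} \to \tilde R/\tilde{\mathfrak{m}}^{3N}$, and both sides have the same $\mathbb{C}$-basis, namely the finitely many monomials of total degree less than $3N$, so this map is an isomorphism. Combining this with the identity $\mathfrak{m}^{3N}\tilde R = \tilde{\mathfrak{m}}^{3N}$ (both ideals of $\tilde R$ are generated by the same monomials), I would compute
$$\tilde M \;=\; M \otimes_R \tilde R \;=\; M \otimes_{R/\mathfrak{m}^{3N}} \tilde R/\mathfrak{m}^{3N}\tilde R \;\cong\; M \otimes_{R/\mathfrak{m}^{3N}} R/\mathfrak{m}^{3N} \;=\; M,$$
and tracing through the identifications would confirm that the resulting map is exactly $m \otimes 1 \mapsto m$.

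The only step requiring genuine care is the ideal bookkeeping that justifies $\mathfrak{m}^{3N}\tilde R = \tilde{\mathfrak{m}}^{3N}$ and that base change legitimately passes through the quotient by $\mathfrak{m}^{3N}$; both reduce to checking on monomials, which is elementary. There is no substantive obstacle beyond this indexing — the real content of the proposition is the Artinian reduction in the first step, which is what converts the transcendental-looking completion into something entirely algebraic.
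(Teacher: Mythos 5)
Your argument is correct. Note that the paper does not actually supply a proof of Proposition~\ref{Proposition 3.3}; it dismisses it (together with Proposition~\ref{Proposition 3.4}) as a ``standard result in algebra,'' so there is no proof in the paper to compare against. Your Artinian-reduction argument is the standard one and fills the gap cleanly: the reduction from the pointwise support hypothesis to $\mathfrak{m}^{3N}M = 0$ is sound (a monomial of total degree $\geq 3N$ in three variables must contain some variable to a power $\geq N$), the identification $\mathfrak{m}^{3N}\tilde R = \tilde{\mathfrak{m}}^{3N}$ and the isomorphism $R/\mathfrak{m}^{3N} \cong \tilde R/\tilde{\mathfrak{m}}^{3N}$ are both checked correctly by matching monomial bases, and the base-change step $M \otimes_R \tilde R \cong M \otimes_{R/\mathfrak{m}^{3N}} \tilde R/\mathfrak{m}^{3N}\tilde R$ is the usual fact that tensoring a module killed by $I$ factors through $R/I$. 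The natural map $m \mapsto m \otimes 1$ is visibly $R$-linear, so the conclusion holds as $R$-modules as stated. One small tightening, not logically necessary: $\mathfrak{m}^{3N-2}M = 0$ already, but $3N$ works fine for the argument.
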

\begin{proposition}\label{Proposition 3.4} Suppose $M$ is a finite dimensional module over polynomial ring supported at $(\alpha, \beta, \gamma)\neq (0,0,0)$. Let $\tilde{M}$ be the formal completion of $M$. Then $\tilde{M}=0$. \end{proposition}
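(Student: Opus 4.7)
The plan is to unwind the definitions and exploit the fact that the formal completion here is taken at the origin (as is clear from the statement of Proposition \ref{Proposition 3.3}, since completion at the maximal ideal of the support point would automatically give an isomorphism). So $\tilde R = \mathbb{C}[[x,y,z]]$, and the hypothesis says there is an $N$ with
\[
(x-\alpha)^N M = (y-\beta)^N M = (z-\gamma)^N M = 0.
\]

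The key observation is that because $(\alpha,\beta,\gamma)\neq (0,0,0)$, at least one of the coordinates is nonzero; say $\alpha\neq 0$. Then inside $\tilde R=\mathbb{C}[[x,y,z]]$ the element
\[
x-\alpha = -\alpha\Bigl(1-\tfrac{x}{\alpha}\Bigr)
\]
has a nonzero constant term, and is therefore a unit in the power series ring, with inverse given by the geometric series $-\alpha^{-1}\sum_{k\ge 0}(x/\alpha)^k$. Consequently $(x-\alpha)^N$ is also a unit in $\tilde R$.

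Now I would show every elementary tensor $m\otimes r \in M\otimes_R \tilde R$ vanishes. Using the unit $u:=(x-\alpha)^{-N}\in \tilde R$ and the support assumption,
\[
m \otimes r \;=\; m \otimes (x-\alpha)^N u\, r \;=\; (x-\alpha)^N m \otimes u\, r \;=\; 0 \otimes u\, r \;=\; 0.
\]
Since elementary tensors generate $M\otimes_R \tilde R$, this forces $\tilde M=0$.

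There is essentially no obstacle here beyond making sure the reader agrees on which ideal we are completing at; once that is settled, the content of the argument is the standard fact that localizing (or completing) at the origin kills any module whose support is disjoint from the origin, realized concretely by the unit-trick above. I would include one sentence pointing out that if instead one completed at $(x-\alpha,y-\beta,z-\gamma)$, the analogue of Proposition \ref{Proposition 3.3} would apply and yield $\tilde M\cong M$ rather than $0$, to preempt any confusion about conventions.
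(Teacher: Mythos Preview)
Your argument is correct. The unit trick is exactly the right idea: since $(\alpha,\beta,\gamma)\neq(0,0,0)$, one of the linear forms $x-\alpha$, $y-\beta$, $z-\gamma$ has nonzero constant term and is therefore invertible in $\mathbb{C}[[x,y,z]]$, so the annihilating element $(x-\alpha)^N$ (say) becomes a unit after tensoring and kills every elementary tensor. The tensor manipulation is valid because $(x-\alpha)^N$ lies in the polynomial ring $R$, so it can be moved across $\otimes_R$.

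As for comparison with the paper: the paper does not actually prove Proposition~\ref{Proposition 3.4}. Immediately after stating it, the author writes ``These are standard results in algebra so I will assume the reader is familiar with these results.'' Your write-up therefore supplies a proof where the paper gives none, and it is the expected standard argument. Your closing remark clarifying that the completion is at the origin (not at the support point) is a worthwhile addition, since the paper's Definition of formal completion is terse on this point and the reader has to infer the convention from the contrast between Propositions~\ref{Proposition 3.3} and~\ref{Proposition 3.4}.
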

Assuming these two claims, formal completion is a way to study a singularity locally. Proposition \ref{Proposition 3.3} allows us to work with the polynomial ring as opposed to the power series ring. These are standard results in algebra so I will assume the reader is familiar with these results.\\

\begin{theorem}\label{Theorem 3.5} For type $A_{n}$ singularities, the space of power series in $\mathbb{C}[[u,v,t]]$ annihilated by the differential operators $$ev\vert_{(0,0,0)}, \frac{\partial }{\partial t}\vert_{(0,0,0)}, \frac{\partial ^{2}}{\partial^{2} t}\vert_{(0,0,0)}, ... \frac{\partial ^{n-1}}{\partial^{n-1} t}\vert_{(0,0,0)}$$ is equal to the Jacobian ideal for degrees in the stable range. The same results hold for type $D_{n}$ singularities with operators $$\frac{\partial}{\partial v}\vert_{s}, \frac{\partial}{\partial t}\vert_{s}, \frac{\partial^{2} }{\partial^{2} t}\vert_{s}, ..., \frac{\partial^{n-3} }{\partial^{n-3} t}\vert_{s}.$$ Similarly, the same results hold for type $E_{6}$ singularities with operators $$\frac{\partial}{\partial v}\vert_{s}, \frac{\partial}{\partial t}\vert_{s},  \frac{\partial}{\partial v}\frac{\partial}{\partial t}\vert_{s},\frac{\partial^{2} }{\partial^{2} t}\vert_{s}, \frac{\partial^{2} }{\partial^{2} t}\frac{\partial}{\partial v}\vert_{s}.$$ For $E_{7}$ singularities, the operators are $$\frac{\partial}{\partial v}\vert_{s}, \frac{\partial}{\partial t}\vert_{s}, \frac{\partial}{\partial v}\frac{\partial}{\partial t}\vert_{s},  \frac{\partial^{2} }{\partial^{2} t}\vert_{s}, \frac{\partial^{3} }{\partial^{3} t}-\frac{\partial^{2} }{\partial^{2} v}\vert_{s}, \frac{\partial^{4} }{\partial^{4} t}-3\frac{\partial^{2} }{\partial^{2} v}\frac{\partial}{\partial t}\vert_{s}.$$ Finally, the operators for $E_{8}$ singularities are given by $$\frac{\partial}{\partial v}\vert_{s}, \frac{\partial}{\partial t}\vert_{s}, \frac{\partial}{\partial v}\frac{\partial}{\partial t}\vert_{s},\frac{\partial^{2} }{\partial^{2} t}\vert_{s}, \frac{\partial^{2} }{\partial^{2} t}\frac{\partial}{\partial v}\vert_{s}, \frac{\partial^{3} }{\partial^{3} t}\vert_{s}, \frac{\partial^{3} }{\partial^{3} t}\frac{\partial}{\partial v}\vert_{s}.$$  \end{theorem}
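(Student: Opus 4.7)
The plan is to reduce to a local analysis at each singular point via formal completion, and then for each ADE normal form compute the Milnor algebra explicitly and verify that the listed operators (together, where needed, with the evaluation functional $ev|_s$) form a basis dual to a monomial basis of this algebra.

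First, since the singularities are isolated, the Jacobian module $R/J$ is finite dimensional and decomposes as a direct sum of components supported at each singular point. By Propositions~\ref{Proposition 3.3} and~\ref{Proposition 3.4}, formal completion at a singular point $s$ isolates its component exactly, so it suffices to prove the statement locally at each $s$ in the power series ring $\mathbb{C}[[u,v,t]]$. By the ADE hypothesis, a formal change of coordinates at $s$ brings $(f,s)$ to one of the listed normal forms with $s$ at the origin; both the Jacobian ideal and the ``operator evaluated at $s$'' functional transform covariantly under such a change, so without loss of generality $f$ is already in normal form in variables $(u,v,t)$.

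Next, for each type $A_n$, $D_n$, $E_6$, $E_7$, $E_8$, I would write down a monomial basis of the Milnor algebra $\mathbb{C}[[u,v,t]]/J$ of size $\mu$ — for example, $\{1,t,\ldots,t^{n-1}\}$ for $A_n$, $\{1,v,t,t^2,\ldots,t^{n-2}\}$ for $D_n$, and $\{1,v,t,vt,t^2,t^3,t^4\}$ for $E_7$ using the relations $3v^2+t^3=0$ and $vt^2=0$. Each listed operator is a constant-coefficient differential operator evaluated at $s$, so it annihilates every partial $f_{x_i}$ and therefore descends to a linear functional on $R/J$. Pairing the operators against the monomial basis yields a $\mu\times\mu$ matrix; verifying that it is invertible immediately implies that the common kernel of the operators in $\mathbb{C}[[u,v,t]]$ equals $J$. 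To pass from this local formal statement back to ``degrees in the stable range,'' I would use that for $m\geq 3(N-2)$ the graded piece $(R/J)_m$ already has dimension $\mu(X)$ by Dimca--Sticlaru~[6], so the formal completion and the polynomial quotient agree on this range and the operator characterization transfers to polynomials of those degrees.

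The main obstacle is the verification for $E_7$. In the other cases the partials decouple the two nontrivial variables (the local relations have the form $v^a=0$, $t^b=0$), so the dual functional to each basis monomial $v^i t^j$ is the single partial $\partial_v^i\partial_t^j|_s$ up to a factorial scalar and the pairing matrix is essentially diagonal. For $E_7$, however, the relation $3v^2+t^3=0$ couples the $v$- and $t$-towers: the basis element $t^3$ can equivalently be represented as $-3v^2$, so a functional that merely reads the coefficient of $t^3$ in a Taylor expansion double-counts. The correct dual functional must vanish on $t^3+3v^2$, forcing the combination $\partial_t^3-\partial_v^2$; likewise, the dual to $t^4$ must be a suitable combination of $\partial_t^4$ and $\partial_v^2\partial_t$. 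Matching the precise coefficients to those listed in the theorem, and reverifying that each combined operator still annihilates both $3v^2+t^3$ and $vt^2$, is the bulk of the computational work.
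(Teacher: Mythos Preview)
Your overall plan matches the paper's proof closely: localize via formal completion (Propositions~\ref{Proposition 3.3} and~\ref{Proposition 3.4}), put the singularity in normal form, exhibit a monomial basis of the Milnor algebra, and show the listed operators form a dual basis so that their common kernel equals $J$ by a dimension count. The paper phrases the last step as surjectivity of the map $\phi(h)=(D_1 h,\ldots,D_\mu h)$ onto $\mathbb{C}^\mu$, which is exactly your invertible pairing-matrix condition; your identification of $E_7$ as the case where the dual basis is not simply a family of pure monomial derivatives is also the paper's point, and your chosen monomial bases agree with the paper's up to the relation $3v^2+t^3=0$.

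There is, however, one genuine gap. You write that each operator ``annihilates every partial $f_{x_i}$ and therefore descends to a linear functional on $R/J$'', and in the $E_7$ paragraph you speak of ``reverifying that each combined operator still annihilates both $3v^2+t^3$ and $vt^2$''. But annihilating the \emph{generators} of $J$ is not sufficient for a functional to vanish on all of $J$; for instance $\partial_x^3|_0$ kills $x^2$ but not $x\cdot x^2=x^3$, so it does not descend to $\mathbb{C}[[x]]/(x^2)$. Without $J\subseteq S$, the invertibility of your pairing matrix on a chosen set of monomials says nothing about whether $S=J$. The paper closes this gap by an explicit Leibniz-rule induction (carried out in detail for $A_n$, and asserted to be analogous for the other types) showing that each operator kills $h_1 f_u+h_2 f_v+h_3 f_t$ for arbitrary $h_i$. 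Once you supply that verification---which is routine but not automatic, and in particular is where the orders of the $t$-derivatives being strictly below the $t$-exponent in the generators actually gets used---your pairing-matrix argument goes through and is equivalent to the paper's.
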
 
\begin{proposition}\label{Proposition 3.6}: The differential operator $\frac{\partial ^{k}}{\partial^{k} t}\vert_{(0,0,0)}$ is mapped to a combination of $k$ order and lower differential operators in $x,y,z$ evaluated at the origin through the analytic change of coordinates. The same holds true if we replace $t$ with $u$ or $v$. \end{proposition}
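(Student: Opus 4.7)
The plan is to iterate the chain rule and induct on $k$. Let $\phi:(x,y,z)\to(u,v,t)$ denote the formal analytic change of coordinates guaranteed by the definition of an ADE singularity, and let $\psi=\phi^{-1}=(\psi_{1},\psi_{2},\psi_{3})$, so each $\psi_{i}\in\C[[u,v,t]]$ with $\psi_{i}(0)=0$. Any formal power series $F(u,v,t)$ corresponds to $G:=F\circ\phi\in\C[[x,y,z]]$. The goal is to show that the functional $F\mapsto\frac{\partial^{k}F}{\partial t^{k}}\big|_{0}$ becomes, under this correspondence, a $\C$-linear combination of functionals of the form $G\mapsto \partial_{x}^{a}\partial_{y}^{b}\partial_{z}^{c}G\big|_{0}$ with $a+b+c\leq k$, where the coefficients depend on $\psi$ but not on $F$.

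For the base case $k=1$ the ordinary chain rule gives
$$\frac{\partial F}{\partial t}\bigg|_{0}=\sum_{i=1}^{3}\frac{\partial\psi_{i}}{\partial t}(0)\,\partial_{i}G\big|_{0},$$
which already has the desired form. For general $k$ I would instead view $\partial_{t}$ globally as a derivation of $\C[[x,y,z]]$: expressing it in the $(x,y,z)$ coordinate system yields $\partial_{t}=a\,\partial_{x}+b\,\partial_{y}+c\,\partial_{z}$, where $a,b,c\in\C[[x,y,z]]$ are obtained from the columns of the Jacobian of $\psi$ re-expressed via $\phi$. A straightforward induction on $k$, using only the Leibniz rule to commute $\partial_{t}$ past $a,b,c$ and their iterated derivatives, then shows that $\partial_{t}^{k}$ is a differential operator of order at most $k$ whose coefficients are formal power series in $(x,y,z)$. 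Evaluating at the origin collapses each coefficient to a constant, giving exactly the required combination of $\partial_{x}^{a}\partial_{y}^{b}\partial_{z}^{c}\big|_{0}$ with $a+b+c\leq k$. Replacing $t$ by $u$ or $v$ throughout yields the corresponding statements for $\partial_{u}$ and $\partial_{v}$ without any further work.

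The underlying calculation is routine, so I do not expect any real obstacle; the main task is careful bookkeeping of how the coefficients proliferate with $k$. The one point that requires attention is that everything must work for \emph{formal} rather than convergent power series: this is legitimate because $\psi(0)=\phi(0)=0$ ensures the compositions $F\circ\phi$ and $G\circ\psi$ converge in the $(x,y,z)$-adic and $(u,v,t)$-adic topologies respectively, so that the formal chain rule applies term by term. For an explicit closed form of the coefficients one could invoke Fa\`a di Bruno's formula, but for the application to Theorem \ref{Theorem 3.5} only the order bound is needed.
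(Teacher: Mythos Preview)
Your proposal is correct and follows essentially the same approach as the paper: both arguments iterate the multivariable chain rule and induct on $k$, with the base case $k=1$ and the inductive step coming from applying $\partial_{t}=a\,\partial_{x}+b\,\partial_{y}+c\,\partial_{z}$ once more via Leibniz. Your version is slightly cleaner in that you keep the coefficients as power series and evaluate at the origin only at the end, whereas the paper intermixes evaluation with the induction, but this is a presentational rather than a mathematical difference.
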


\begin{proof} Proposition 3.6 is standard Calculus result which I will assume the reader is familiar with. To prove Theorem \ref{Theorem 3.5}, first note that the space of polynomials annihilated by the differential operators contains the ideal generated by the partials. The partials are given by $u,v,t^{n}$. We proceed to prove by induction. First it is clear evaluation at the origin annihilates the partials as this is the singular point. Let $h=h_{1}u+h_{2}v+h_{3}t^{n}$ where $h_{1},h_{2},h_{3}\in \mathbb{C}[[u,v,t]]$. For simplicity, let $s$ be the origin. The product rule shows that  
	$$\frac{\partial }{\partial t}h\vert_{s}= \frac{\partial }{\partial t}h_{1}\vert_{s} \cdot u\vert_{s}+ \frac{\partial }{\partial t}h_{2}\vert_{s}\cdot v\vert_{s} + \frac{\partial }{\partial t}h_{3}\vert_{s}\cdot t^{n}\vert_{s}+ nh_{3}t^{n-1}\vert_{s} = 0$$
	Suppose that $$\frac{\partial ^{i}}{\partial^{i} t}h_{1}u\vert_{s},\frac{\partial ^{i}}{\partial^{i} t}h_{2}v\vert_{s}, \frac{\partial ^{i}}{\partial^{i} t}h_{3}t^{n}\vert_{s}=0$$ for $i=0,...,k$ where $i=0$ is the evaluation operator. Then for simplicity of notation, let $D(i,j)f_{1}f_{2}= \frac{\partial ^{i}}{\partial^{i} t}f_{1}\vert_{s}\frac{\partial ^{j}}{\partial^{j} t}f_{2}\vert_{s}$. Then for $k+1\leq n-1$
	$$ \frac{\partial ^{k+1}}{\partial^{k+1} t}h_{1}u\vert_{s}= D(k+1,0)h_{1}u+D(k,1)h_{1}u+...+D(1,k)h_{1}u+D(0,k+1)h_{1}u$$
	$D(k+1,0)h_{1}u=0$ because $u$ evaluates to $0$, and $D(0,k+1)h_{1}u=0$ since we are differentiating $u$ with respect to $t$. The other terms are $0$ by the induction hypothesis. For
	$$ \frac{\partial ^{k+1}}{\partial^{k+1} t}h_{2}v\vert_{s}= D(k+1,0)h_{2}v+D(k,1)h_{2}v+...+D(1,k)h_{2}v+D(0,k+1)h_{2}v,$$
	$D(k+1,0)h_{1}v=0$ because $v$ evaluates to $0$, and $D(0,k+1)h_{2}v=0$ as we are differentiating $v$ with respect to $t$. The other terms are $0$ by induction hypothesis. For
	$$ \frac{\partial ^{k+1}}{\partial^{k+1} t}h_{3}t^{n}\vert_{s}= D(k+1,0)h_{3}t^{n}+D(k,1)h_{3}t^{n}+...+D(1,k)h_{3}t^{n}+D(0,k+1)h_{3}t^{n},$$
	$D(k+1,0)h_{1}t^{n}=0$ because $t^{n}$ evaluates to $0$ and $D(0,k+1)h_{3}t^{n}=0$ as we are differentiating $t^{n}$ with respect to $t$ $k+1$ times. For $k+1\leq n-1$, this leads to $C\cdot t^{j}$ for some $j>0$ and $C$ a constant so evaluation at the origin gives $0$ .The other terms are $0$ by induction hypothesis.  \\
	
	Therefore, the operators stated in Theorem \ref{Theorem 3.5} annihilate any linear combination of the partials and hence, the space of polynomials annihilated by the operators contain the Jacobian ideal. \\ \\
	Let $S$ be the space annihilated by the differential operators. Define
	\begin{align*}
		&\phi: \mathbb{C}[[u,v,t]]\longrightarrow \mathbb{C}^{n}\\
		&\phi(h)= (\text{ev}(h)\vert_{s}, \frac{\partial }{\partial t}h\vert_{s}, ..., \frac{\partial ^{n-1}}{\partial^{n-1} t}h\vert_{s}).
	\end{align*}
	The kernel of $\phi$ is $S$. The map $\phi$ is surjective. Let $e_{i}$ be the vector that is $1$ on the ith component and $0$ elsewhere. Then $1$ is mapped to $e_{1}$, $t$ is mapped to $e_{2}$, $t^{2}$ is mapped to $2e_{3}$, and continuing on, $t^{n}$ is mapped to $n!e_{n}$. Hence, the map is surjective. So we have 
	$$\mathbb{C}[[u,v,t]]/S \cong \mathbb{C}^{n}.$$
	However, if $J$ is the Jacobian ideal generated by $u,v,t^{n}$, the quotient $\mathbb{C}[[u,v,t]]/J \cong  \mathbb{C}^{n}$ is the space generated by $1,t, ..., t^{n-1}$. Since $J\subset S$, we have that $J=S$.  Note the proof above works if we replace $\mathbb{C}[[u,v,t]]$ with $\mathbb{C}[u,v,t]$. Using Proposition \ref{Proposition 3.3}, since $u,v,t^{n}$ annihilate the  quotient, $\mathbb{C}[[u,v,t]]/(g_{u},g_{v},g_{t})$ is supported at $s$ so it is isomorphic to the polynomial ring. \\
	
	The same result holds for the corresponding space of differential operators in $x,y,z$ variables. We can construct $\phi$ for the operators given from Proposition 
	\ref{Proposition 3.6}. It remains to show $\mathbb{C}[[x,y,z]]/(f_{x},f_{y},f_{z})$ is supported at the origin. The analytic change of coordinates maps the origin to the origin. Hence, the change of coordinates has no constant term. Since $\mathbb{C}[[u,v,t]]/(g_{u},g_{v},g_{t})$ is supported at the origin, there exists $N$ such that 
	$$u^{N}\mathbb{C}[[u,v,t]]/(g_{u},g_{v},g_{t}), v^{N}\mathbb{C}[[u,v,t]]/(g_{u},g_{v},g_{t}), t^{N}\mathbb{C}[[u,v,t]]/(g_{u},g_{v},g_{t}) = 0.$$ 
	Let $x=h_{1}(u,v,t), y=h_{2}(u,v,t), z=h_{3}(u,v,t)$. Then by the analytic change of coordinates 
	$$x^{3N}\mathbb{C}[[x,y,z]]/(f_{x},f_{y},f_{z}), y^{3N}\mathbb{C}[[x,y,z]]/(f_{x},f_{y},f_{z}), z^{3N}\mathbb{C}[[x,y,z]]/(f_{x},f_{y},f_{z}) $$
	map to zero since the change of coordinates gives an isomorphism on the level of quotients since Jacobian ideal is invariant under change of coordinates. Hence, $\mathbb{C}[[x,y,z]]/(f_{x},f_{y},f_{z})$ is supported at the origin as well so it is isomorphic to the polynomial ring. Hence, my code can work with the polynomial ring instead of the power series ring. \\
	
	Note this characterizes the Jacobian ideal,$J$, as a set annihilated by the corresponding differential operators in the stable range, which I denoted as $S$. In the stable range, the quotient of the polynomial ring by the $J$ is of dimension $M$ where $M$ is the global Milnor number. There are $M$ differential operators which map surjectively to $\mathbb{C}^{M}$ as given by the map $\phi$ above with quotient $S$. As $J \subset S$, $J=S$. \\
	
	Note the same proof works for type $D_{n}$ and type $E_{n}$ singularities as well. The standard equation for $D_{n}$ is given by $u^{2}+tv^{2}+t^{n-1}=0$. The Jacobian ideal is $J=(u,vt, t^{n-2}+v^{2})$. The $n$ operators that annihilate any element of the Jacobian are evaluation at the origin, $$\frac{\partial}{\partial v}\vert_{s}, \frac{\partial}{\partial t}\vert_{s}, \frac{\partial^{2} }{\partial^{2} t}\vert_{s}, ..., \frac{\partial^{n-3} }{\partial^{n-3} t}\vert_{s}$$. 
	Let $S$ be the space of polynomials annihilated by all the differential operators. As in the proof of Theorem \ref{Theorem 3.5}, the space $S$ contains the Jacobian ideal, $J$. Define
	\begin{align*}
		&\phi: \mathbb{C}[[u,v,t]]\longrightarrow \mathbb{C}^{n}\\
		&\phi(h)= (\text{ev}(h)\vert_{s}, \frac{\partial }{\partial v}h\vert_{s},\frac{\partial}{\partial t}h\vert_{s}, ..., \frac{\partial^{n-3} }{\partial^{n-3} t}h\vert_{s}).
	\end{align*}
	The kernel is $S$ and the map is surjective as the polynomials $1, v, t, ...,t^{n-3}$ give a constant times vectors $e_{1},...,e_{n}$ respectively as in the proof of Theorem \ref{Theorem 3.5}. Hence, by rank nullity, we have $\mathbb{C}[[u,v,t]]/S \cong \mathbb{C}^{n}$. In the stable range, we have that the quotient by Jacobian is a space of dimension $n$, hence $S=J$.\\ 
	
	Now the standard $E_{6}$ equation is given by $u^{2}+v^{3}+t^{4}=0$. The Jacobian ideal is $J=(u,v^{2},t^{3})$.  Along with evaluation at the origin, the operators that annihilate any element of the Jacobian ideal is $$\frac{\partial}{\partial v}\vert_{s}, \frac{\partial}{\partial t}\vert_{s},  \frac{\partial}{\partial v}\frac{\partial}{\partial t}\vert_{s},\frac{\partial^{2} }{\partial^{2} t}\vert_{s}, \frac{\partial^{2} }{\partial^{2} t}\frac{\partial}{\partial v}\vert_{s}.$$ 
	Again, let $S$ be the space of polynomials annihilated by our operators and we have $J\subset S$. Define
	\begin{align*}
		&\phi: \mathbb{C}[[u,v,t]]\longrightarrow \mathbb{C}^{6}\\
		&\phi(h)= (\text{ev}(h)\vert_{s}, \frac{\partial }{\partial v}h\vert_{s},\frac{\partial}{\partial t}h\vert_{s},\frac{\partial}{\partial v}\frac{\partial}{\partial t}h\vert_{s}, \frac{\partial^{2} }{\partial^{2} t}h\vert_{s}, \frac{\partial^{2} }{\partial^{2} t}\frac{\partial}{\partial v}h\vert_{s}).
	\end{align*}
	The kernel is $S$ and the map is surjective as the polynomials $1, v, t, vt, t^{2}, vt^{2}$ map to a constant times vectors $e_{1},...,e_{6}$ respectively. Hence, $S=J$ in the stable range. \\
	The standard $E_{7}$ equation is given by $u^{2}+v^{3}+vt^{3}=0$. The Jacobian ideal is given by $J=(u, 3v^{2}+t^{3}, 3vt^{2})$ Along with evaluation at the origin, the operators that annihilate any element of the Jacobian are $$\frac{\partial}{\partial v}\vert_{s}, \frac{\partial}{\partial t}\vert_{s}, \frac{\partial}{\partial v}\frac{\partial}{\partial t}\vert_{s},  \frac{\partial^{2} }{\partial^{2} t}\vert_{s}, \frac{\partial^{3} }{\partial^{3} t}-\frac{\partial^{2} }{\partial^{2} v}\vert_{s}, \frac{\partial^{4} }{\partial^{4} t}-3\frac{\partial^{2} }{\partial^{2} v}\frac{\partial}{\partial t}\vert_{s}.$$ Let $S$ be the space of polynomials annihilated by all the differential operators. Again, we have $J\subset S$.  Define
	\begin{align*}
		&\phi: \mathbb{C}[[u,v,t]]\longrightarrow \mathbb{C}^{7}\\
		&\phi(h)= (\text{ev}(h)\vert_{s}, \frac{\partial}{\partial v}h\vert_{s}, \frac{\partial}{\partial t}h\vert_{s}, ... ,\frac{\partial^{4} }{\partial^{4} t}h-3\frac{\partial^{2} }{\partial^{2} v}\frac{\partial}{\partial t}h\vert_{s}).
	\end{align*}
	The kernel is $S$ and the map is surjective as the polynomials $1, v, t, vt,  t^{2}, t^{3}-v^{2}, t^{4}-v^{2}t$ map to a constant times vectors $e_{1},...,e_{7}$ respectively. Hence, $S=J$ in the stable range. \\
	The standard $E_{8}$ equation is given by $u^{2}+v^{3}+t^{5}$. The Jacobian ideal is given by $J=(u,v^{2},t^{4}).$ Along with evaluation at the origin, the operators that annihilate any element of the Jacobian area $$\frac{\partial}{\partial v}\vert_{s}, \frac{\partial}{\partial t}\vert_{s}, \frac{\partial}{\partial v}\frac{\partial}{\partial t}\vert_{s},\frac{\partial^{2} }{\partial^{2} t}\vert_{s}, \frac{\partial^{2} }{\partial^{2} t}\frac{\partial}{\partial v}\vert_{s}, \frac{\partial^{3} }{\partial^{3} t}\vert_{s}, \frac{\partial^{3} }{\partial^{3} t}\frac{\partial}{\partial v}\vert_{s}.$$ 
	Let $S$ be the space of polynomials annihilated by all the differential operators. We have $J\subset S$. Define 
	\begin{align*}
		&\phi: \mathbb{C}[[u,v,t]]\longrightarrow \mathbb{C}^{8}\\
		&\phi(h)= (\text{ev}(h)\vert_{s}, \frac{\partial}{\partial v}h\vert_{s}, \frac{\partial}{\partial t}h\vert_{s}, ... ,\frac{\partial^{3} }{\partial^{3} t}\frac{\partial}{\partial v}h\vert_{s}).
	\end{align*}
	The kernel is $S$ and the map is surjective as the polynomials $1, v, t, vt, t^{2}, t^{2}v, t^{3}, t^{3}v$ map to a constant times vectors $e_{1},...,e_{8}$ respectively. Hence, $S=J$ in the stable range. \\ \end{proof} 

\begin{proof}To prove Proposition \ref{Proposition 3.6}, recall the multivariable chain rule.
	$$\frac{\partial}{\partial t}h= \frac{\partial x}{\partial t}\vert_{s} \cdot  \frac{\partial }{\partial x}h\vert_{s} + \frac{\partial y}{\partial t}\vert_{s}\cdot \frac{\partial}{\partial y}h\vert_{s} + \frac{\partial z}{\partial t}\vert_{s}\cdot \frac{\partial}{\partial z}h\vert_{s}.$$
	
	Let $x=f_{1}(u,v,t), y=f_{2}(u,v,t), z=f_{3}(u,v,t) \in \mathbb{C}[[u,v,t]]$ be the analytic change of coordinates. Then $\frac{\partial x}{\partial t}\vert_{s}$ is the coefficient of $t$ in the power series of $x$. $\frac{\partial w}{\partial t}\vert_{s}$ is the coefficient of $t$ in the power series of $y$, and $\frac{\partial z}{\partial t}\vert_{s}$ is the coefficient $t$ in the power series of $z$. Hence, the operator $\frac{\partial}{\partial t}$ is a linear combination of first order operators in $x,y,z$.  \\ 
	
	What about $(\frac{\partial}{\partial t})^{2}$? This is 
	\begin{align*}
		\frac{\partial}{\partial t}\frac{\partial}{\partial t}&= \frac{\partial}{\partial t}\left(\frac{\partial x}{\partial t} \cdot  \frac{\partial }{\partial x} + \frac{\partial y}{\partial t}\cdot \frac{\partial}{\partial y} + \frac{\partial z}{\partial t}\cdot \frac{\partial}{\partial z}\right)\\
		&=\frac{\partial}{\partial t}(\frac{\partial x}{\partial t} \cdot  \frac{\partial }{\partial x}) + \frac{\partial}{\partial t}(\frac{\partial y}{\partial t}\cdot \frac{\partial}{\partial y}) + (\frac{\partial}{\partial t}\frac{\partial z}{\partial t}\cdot \frac{\partial}{\partial z}).
	\end{align*}
	I will compute the first term and the rest follow in the exact same way. In the first half of the product rule, what I want to do is take the derivative of $x$ with respect to $t$ twice and evaluate at $0$. This is equivalent to 2 times the coefficient of $t^{2}$ in the power series expansion of $x$.
	In the second half of the product rule, we have $$(\frac{\partial}{\partial t} \frac{\partial}{\partial x})\frac{\partial x}{\partial t}\vert_{(0,0,0)}=C\cdot \frac{\partial}{\partial t} \frac{\partial}{\partial x}\vert_{(0,0,0)},$$
	where $C$ is the coefficient of $t$ in the power series of $x$. Now
	\begin{align*}
		C\frac{\partial}{\partial t} \frac{\partial}{\partial x}\vert_{s}&=C(\frac{\partial x}{\partial t}\cdot  \frac{\partial }{\partial x}\frac{\partial}{\partial x} + \frac{\partial y}{\partial t}\cdot \frac{\partial}{\partial y}\frac{\partial}{\partial x} + \frac{\partial z}{\partial t}\cdot \frac{\partial}{\partial z}\frac{\partial}{\partial x})\vert_{s}\\
		&=C(C\frac{\partial }{\partial x}\frac{\partial}{\partial x}+C_{1}\frac{\partial}{\partial y}\frac{\partial}{\partial x}+C_{2}\frac{\partial}{\partial z}\frac{\partial}{\partial x})\vert_{s}
	\end{align*}
	where $C_{1}, C_{2}$ are the coefficients of $t$ in the power series of $y$ and $z$ respectively. Hence, we have a linear combination of second order partials. \\
	
	Suppose $\frac{\partial^{i}}{\partial^{i} t}\vert_{s}$ is a linear combination of $k$ order and lower differential operators in $x,y,z$ for $i$ up to $k$. So $\frac{\partial^{k}}{\partial^{k} t}=C_{0}D_{0}+C_{1}D_{1}+C_{2}D_{2}+...+C_{k}D_{k}=D$ where $C_{i}$ are constants and $D_{i}$ are differential operators of order $i$ evaluated at the origin. Then for $k\leq n$,
	\begin{align*}
		\frac{\partial^{k+1}}{\partial^{k+1} t}h\vert_{s}&= \frac{\partial}{\partial t}\frac{\partial^{k}}{\partial^{k} t}h\vert_{s}=\frac{\partial}{\partial t}\vert_{s}Dh \\
		&= \left(\frac{\partial x}{\partial t}\vert_{s} \cdot  \frac{\partial }{\partial x}h\vert_{s} + \frac{\partial y}{\partial t}\vert_{s}\cdot \frac{\partial}{\partial y}h\vert_{s} + \frac{\partial z}{\partial t}\vert_{s}\cdot \frac{\partial}{\partial z}h\vert_{s}\right)Dh.
	\end{align*}
	So $(\frac{\partial x}{\partial t}\vert_{s} \cdot  \frac{\partial }{\partial x}h\vert_{s})$ applied to $C_{i}D_{i}$ gives an order $i+1$ operator given by $\frac{\partial }{\partial x}h\vert_{s}D_{i}$. So as the highest order operator is $D_{k}$, our operator is at most order $k+1$. The same applies for the other terms. Note this actually holds in the $u,v$ variables as we just repeat the proof replacing $u$ with $t$ or $v$ with $t$. This proves Proposition \ref{Proposition 3.6}.\\ \end{proof}

\subsection{Algorithm}

Now I go over the plan for the algorithm. There will be two codes: one for computing the basis of the rigid cohomology groups and one for computing the action of Frobenius and reducing in cohomology. Use the computing basis code to compute a basis on subdiagonal $k=4N$ as this value is in the stable range. Let us call this $\beta_{1}, \beta_{2}, ..., \beta_{M}$ where $M$ is the global Milnor number. Now suppose the image of Frobenius is of degree $dN$ for some $d$. We have a basis of degree $2N$. We prove the following theorem.

\begin{theorem}\label{Theorem 3.7} Suppose we have a basis $\beta_{1}, \beta_{2}, ..., \beta_{M}$ in degree $4N$ on the subdiagonal. Suppose there exists a degree $(d-4)N$ polynomial $L$ satisfying the following properties. From Theorem 3, belonging to the Jacobian ideal is equivalent to being annihilated by specific differential operators, with evaluation at singular points being part of those operators. Assume $L$ is not annihilated by evaluation at the singular points. Furthermore, assume that lower order pieces in each term in the higher order operators annihilate $L$. Define $\chi$ to be the multiplication by $L$ map. This map is well-defined, maps an element not in the quotient to an element not in the quotient in the higher level, and the image is a basis of the higher part of the subdiagonal on the $E_{1}$ page. In other words, $L\beta_{1}, L\beta_{2}, ... L\beta_{M}$ is a basis on the higher level of our subdiagonal. \end{theorem}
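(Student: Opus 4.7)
The plan is to reduce Theorem \ref{Theorem 3.7} to the characterization of the Jacobian ideal provided by Theorem \ref{Theorem 3.5} combined with a dimension count. Since the subdiagonal cohomology in the stable range can be identified with the Milnor algebra $R/J$ (where $R=\mathbb{C}[w,x,y,z]$ and $J=(f_w,f_x,f_y,f_z)$) in the corresponding degree, it suffices to show that $\chi$, viewed as multiplication by $L$ on the relevant graded pieces of $R/J$, is an isomorphism.

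Well-definedness is immediate: since $J$ is an ideal, $\beta\in J$ forces $L\beta\in J$, so multiplication by $L$ descends to a linear map from the degree $4N$ piece of $R/J$ to the degree $dN$ piece. Both degrees lie in the stable range, so by the Dimca--Sticlaru dimension formula quoted in the excerpt both pieces have dimension equal to the global Milnor number $M$; hence it suffices to prove injectivity.

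For injectivity I would invoke Theorem \ref{Theorem 3.5} locally at each singular point. For each $s_j$, the ADE list of operators assembles into a linear map $\phi_{s_j}$ whose kernel in the stable range cuts out the local Jacobian condition; globally $J$ is the intersection of the kernels of all $\phi_{s_j}$. Suppose $\beta$ represents a nonzero class in degree $4N$; then some $\phi_{s_j}(\beta)\neq 0$. The heart of the argument is the identity
\[
\phi_{s_j}(L\beta)=L(s_j)\,\phi_{s_j}(\beta).
\]
This follows from the Leibniz rule: each operator $D$ in the list expands at $s_j$ into a sum of terms of the shape $D'(L)\cdot D''(\beta)$ with $\mathrm{ord}(D')+\mathrm{ord}(D'')=\mathrm{ord}(D)$, and the hypothesis that ``lower order pieces in each term in the higher order operators annihilate $L$'' is precisely the statement that $D'(L)|_{s_j}=0$ whenever $\mathrm{ord}(D')\geq 1$. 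Only the summand with $D'=\mathrm{ev}|_{s_j}$ survives, producing $L(s_j)D(\beta)|_{s_j}$. Combined with $L(s_j)\neq 0$ from the evaluation hypothesis on $L$, this yields $\phi_{s_j}(L\beta)\neq 0$, so $L\beta\notin J$. Thus $\chi$ is injective on $R/J$, and by the dimension match it is an isomorphism; consequently $L\beta_1,\dots,L\beta_M$ is a basis of the degree $dN$ piece of the subdiagonal.

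The main obstacle is giving a precise uniform interpretation of ``lower order pieces annihilate $L$'' across all ADE types. For $A_n$ it reduces to the clean single-variable condition $\partial_t^i L|_{s_j}=0$ for $1\le i\le n-1$. For $D_n$, $E_6$, and $E_8$ it becomes an explicit finite list of mixed $(v,t)$-partials of $L$ that must vanish at $s_j$, which can be read off directly from the operator list. For $E_7$, where the operators appear as linear combinations such as $\partial_t^3-\partial_v^2$ and $\partial_t^4-3\partial_v^2\partial_t$, one must Leibniz-expand each monomial operator separately, then bundle the resulting sub-operators acting on $L$ into the same linear combinations, and impose vanishing of those combined expressions at $s_j$. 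Once this bookkeeping is pinned down, the Leibniz verification proceeds term by term exactly as in the $A_n$ case, and the proof closes.
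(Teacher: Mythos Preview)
Your proposal is correct and follows essentially the same route as the paper: well-definedness from $J$ being an ideal, the key injectivity step via the Leibniz expansion $D_i(L\beta)|_s=(D_i\beta)|_s\cdot L(s)$ using the hypothesis on $L$ together with Theorem \ref{Theorem 3.5}, and then the dimension match in the stable range to upgrade injectivity to an isomorphism. If anything, your formulation of the Leibniz step and your explicit handling of the $E_7$ combined operators are a bit more careful than the paper's, but the argument is the same.
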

\begin{proof}We will first show $\chi$ is well-defined. Since we can extend by linearity, consider the $3$-form $hdx\wedge dy \wedge dz$. Let us call the lower level on subdiagonal $B_{V}$ and the upper level on the subdiagonal $B_{U}$. Let $L$ be our multiplying factor. Then we have a map $\chi$ given by multiplication by the factor $L$.
	\begin{align*} 
		&\chi : B_{V}\longrightarrow B_{U}\\
		&\chi(\omega)= L\omega
	\end{align*}
	for a 3-form $\omega$. By linearity, suppose 
	$hdx\wedge dy\wedge dz= (f_{x}h_{1}+f_{y}h_{2}+f_{z}h_{3})dx\wedge dy \wedge dz  $. Then $$\chi(hdx\wedge dy\wedge dz)= h\cdot L dx\wedge dy \wedge dz= (f_{x}h_{1}L+f_{y}h_{2}L+f_{z}h_{3}L)dx\wedge dy \wedge dz,$$ which remains in the Jacobian. \\
	For example, if the higher order operator is $(\frac{\partial}{\partial z})^{2}+ \frac{\partial}{\partial x}\frac{\partial}{\partial y}+\frac{\partial}{\partial w}$, then the assumption is that each term in the sum annihilates $L$ and $\frac{\partial}{\partial z}, \frac{\partial}{\partial x}, \frac{\partial}{\partial y},\frac{\partial}{\partial w}$ also annihilate $L$. Then suppose that $h$ does not lie in the Jacobian ideal. We wish to show that $hL$ also does not lie in the Jacobian ideal. Suppose that
	$$hL= f_{x}h_{1}+f_{y}h_{2}+f_{z}h_{3}$$
	From Theorem 3.1, there exist operators $D_{1},...,D_{M}$ that annihilate $hL$. Since $h$ does not lie in the Jacobian, there exists $D_{i}$ that does not annihilate $h$. Applying $D_{i}$ to the right hand side gives $0$. Applying $D_{i}$ to the left hand side, by the assumption on $L$, we get $D_{i}(hL)= (D_{i}h)ev(L) \neq 0$. Hence, we have a contradiction. Thus, we can conclude the image of an element not in the image of Koszul will not be in the image of Koszul. \\
	Using the fact that elements not in the Jacobian are mapped to elements not in the Jacobian, we can now show that the image is a basis. Suppose we have linear independence. Then from the result that the dimension of the space is the global Milnor number, we immediately get that the $M$ elements span the whole space. Suppose there is a nontrivial linear combination 
	$$c_{1}L\beta_{1}+c_{2} L\beta_{2}+ ...+c_{M}L\beta_{M}=0$$
	where $0$ is a representative of an elemeent in the Jacobian as we are on the $E_{1}$ page. Then since dividing by $L$ gives a nontrivial linear combination,
	$$c_{1}\beta_{1}+...+c_{M}\beta_{M}=0.$$
	This is a contradiction since we assumed that these form a basis for the subdiagonal of degree $2N$ on the $E_{1}$ page. Hence, we must have linear independence of the new basis elements and from the argument above, these $M$ terms form a basis for the subdiagonal of degree $dN$. This proves Theorem 3.7. \\ \end{proof} 

Let $h$ be the image of Frobenius of degree $dN$. By Theorem \ref{Theorem 3.7}, we can apply the de Rham differential of the basis for the subdiagonal of degree $dN$ and call them $\alpha_{1},...,\alpha_{M}$.  Since all terms on the $E_{2}$ page are $0$ past the first quadrant, there exist $a_{1}, ..., a_{M}$ such that 
$$h-a_{1}\alpha_{1}-...-a_{M}\alpha_{M}=f_{w}h_{1}+f_{x}h_{2}+f_{y}h_{3}+f_{z}h_{4}.$$
There are $M$ variables $a_{1},...,a_{M}$ that need to be solved. From the proof above, there exist $M$ linearly independent differential operators that eliminate any element in the Jacobian. This is important since the right hand side of the equation will always be $0$ when we apply the differential operators to the equation above. From here, apply the differential operators to the equation above and solve the $M$ system of equations. The system of equations have solution because a function, $h$, being in Jacobian is equivalent to the $M$ operators annihilating $h$ when $h$ is in the stable range by Theorem \ref{Theorem 3.5}. From here, we consider $h-a_{1}\alpha_{1}-...-a_{M}\alpha_{M}$ and undo the Koszul as the element now lies in the Jacobian using Grobner basis. We continue the same way until we reach our basis on $E_{2}$ page. 

\begin{example}In the case we have a single singularity at say $[1:0:0:0]$, the operators are in the variables $x,y,z$ since we work in the affine open set. We can take $L$ to be $w^{k}$ for the appropriate power of $k$. Evaluation at $[1:0:0:0]$ does not annihilate $L$ while all the other operators annihilate $L$ since the other operators are in the variables $x,y,z$.  \end{example}
\begin{example} Suppose the singularities are the standard coordinates in the affine open set. In other words, the singularities are $[1:0:0:0], [0:1:0:0], [0:0:1:0],$ and $[0:0:0:1]$. In this case, suppose our corresponding operators have at most degree $k$. Then $L=w^{j}+x^{j}+y^{j}+z^{j}$ for $j>k$ will be a valid choice for Theorem 5. Since all operators are of degree at most $k$, applying the operators to $L$ and evaluating at the origin will annihilate $L$, and evaluating at the singular points will not annihilate $L$ by construction. For degrees lower, one will have to construct the matrix. \end{example}

Before giving an algorithm, to make calculations faster, in the case our hypersurface in $\mathbb{P}^{3}$ has ADE singularities, the subdiagonal vanishes on the $E_{2}$ page.  For the proof of the following theorem, we will use $p$ as an index rather than a prime.

\begin{theorem} The subdiagonal on the $E_{2}$ page vanishes in the case the hypersurface in $\mathbb{P}^{3}$ has only ADE singularities.\end{theorem}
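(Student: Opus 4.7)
The plan is first to reduce the statement using Saito's vanishing quoted in the preliminaries. Since $H^k(K_f^*) = 0$ for $k \leq n - m = 2$, the position $(s-1, t)$ immediately to the left of any subdiagonal entry $(s, t)$ with $s+t=2$ lives in form degree $2$ and hence vanishes on the $E_1$ page. Consequently $E_2^{s,t} = \ker\bigl(d' \colon E_1^{s,t} \to E_1^{s+1,t}\bigr)$ along the entire subdiagonal, and the task reduces to showing that the horizontal de Rham differential from $3$-forms modulo Koszul into $4$-forms modulo Koszul has trivial kernel.

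To exploit the ADE structure I would localize the problem at each singular point. The Koszul cohomology $H^3(K_f^*)$ is supported on the singular locus of $f$ in $\C^4$, which is a union of lines through the origin corresponding to the singular points of $X \subset \mathbb{P}^3$. Using Propositions \ref{Proposition 3.3} and \ref{Proposition 3.4}, the global subdiagonal decomposes in sufficiently large degree into a direct sum of formal-local contributions at each singular point, where $f$ takes its ADE normal form after an analytic change of coordinates. For each ADE type the local subdiagonal and top-diagonal contributions, as well as the local de Rham differential, are explicit in normal-form coordinates.

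The main step is then a case-by-case verification for each ADE type that the local horizontal differential has trivial kernel. Here Theorem \ref{Theorem 3.5} enters crucially: its operators characterizing the Jacobian ideal at each singular point detect when a subdiagonal representative is nontrivial on $E_1$, and one checks directly that the image of such a representative under $d$ remains nontrivial modulo Koszul. As an auxiliary I would use the Euler identity $\Delta(df \wedge \omega) = Nf\omega - df \wedge \Delta(\omega)$: whenever $df \wedge \omega = 0$ this gives $Nf\omega = df \wedge \Delta(\omega)$, so multiplication by $f$ is identically zero on the $E_1$ subdiagonal, and combined with the quasi-homogeneity of ADE this nilpotency organizes the local Milnor algebras enough to make the $d'$-image of a chosen basis computable.

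The hardest step I expect is handling the mixed operators for $E_7$ and $E_8$ from Theorem \ref{Theorem 3.5}, such as $\partial_t^3 - \partial_v^2$ in the $E_7$ list, which mix variables nontrivially. For those cases injectivity must be verified by a direct matrix computation on the Milnor algebra; since each ADE Milnor algebra has dimension at most $8$ this is tractable but tedious, with the $E_7$ case requiring the most care to pair the characterizing operators against the $d'$-image representative-by-representative.
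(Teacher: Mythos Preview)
Your route is quite different from the paper's, and as written it has two real gaps.

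The paper never tries to analyze the $E_1$ de Rham map directly. It instead invokes Theorem~5.3 of Dimca--Saito~[4], which bounds the dimension of the weight-$p$ piece of the $E_2$ subdiagonal by the number of spectral numbers $\alpha_{h_k,j}$ equal to $p/d$. Since only integer values of $p/d$ matter, the proof becomes a purely numerical check: for each ADE normal form one lists the weights of the monomial $3$-form basis of the local Milnor algebra and verifies that none is an integer in the admissible range (e.g.\ for $A_n$ the spectral numbers are $\tfrac{n+2}{n+1},\ldots,\tfrac{2n}{n+1}$, all strictly between $1$ and $2$). No injectivity argument, no explicit de Rham computation, no use of Theorem~\ref{Theorem 3.5}.

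Your plan has two problems. First, Propositions~\ref{Proposition 3.3} and~\ref{Proposition 3.4} are statements about $R$-modules, but the de Rham differential is not $R$-linear: $d(g\omega)=dg\wedge\omega+g\,d\omega$, and the Leibniz term $dg\wedge\omega$ has no reason to vanish in $H^4(K_f^*)$. So the support decompositions of $H^3$ and $H^4$ do not automatically diagonalize $d'$, and your reduction to one singularity at a time is unjustified without an additional argument controlling the off-diagonal blocks. Second, Theorem~\ref{Theorem 3.5} characterizes membership in the Jacobian ideal, i.e.\ vanishing in $H^4(K_f^*)=R/J$, the \emph{top} diagonal. It does not detect when a $3$-form is trivial in $H^3(K_f^*)$, which is a syzygy module modulo Koszul relations---a genuinely different object. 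You can use those operators to test whether $d\omega$ is nonzero in $H^4$, but not, as you claim, to ``detect when a subdiagonal representative is nontrivial on $E_1$''; so you have no mechanism for producing the $H^3$ basis to which you then apply $d$. The paper's spectral-number route sidesteps both issues because the $\alpha_{h_k,j}$ are local invariants from the outset and the bound never requires an explicit handle on $H^3$.
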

\begin{proof} Following the notation of Theorem 5.3 of Dimca and Saito [4], let $z_{1},...,z_{r}$ be the singularities of $f$.
	Let $\eta_{j}$ be the 3-forms generated by the generators of $C[x,y,z]/(dh_{k})$,where $h_{k}$ is the local equation of $f$ around $z_{k}$. Let $\alpha_{h_{k},j}$ be the weight of $\eta_{j}$. Then from Theorem 5.3 of Dimca and Saito [4],
	$$\text{dim}(N_{p}^{2})\leq \# \left\{ (k,j) \vert \ \ \alpha_{h_{k},j}=\frac{p}{d}\right\},$$
	where $N^{2}$ is the subdiagonal on the $E_{2}$ page. Two points to note is the following. Since we only care about powers of $f$, we only care about $p$ being multiples of $d$. In this case, we only care when  $\alpha_{h_{k},j}=\frac{p}{d}\in \mathbb{Z}$. Second, the inequality runs through all singularities. If we can show that on each singularity the inequality shows that the dimension is 0, we are done since
	$$\# \left\{ (k,j) \vert \ \ \alpha_{h_{k},j}=\frac{p}{d}\right\}= \sum_{i} \# \left\{ j \vert \ \ \alpha_{h_{i},j}=\frac{p}{d}\right\}$$
	Let $\text{wt}(h\Omega)$ denote the weight of the form $h\Omega$. Let us first assume that our hypersurface has a type $A_{n}$ singularity. Then using notation from Theorem 5.3 of [4], in a local analytic coordinate system around our singularity, the function of the hypersurface can be written in the form $xy=z^{n+1}$. The weights of $x,y,z$ are $\frac{1}{2},\frac{1}{2},\frac{1}{n+1}$ respectively. The partials with respect to $x,y,z$ are $y,x,(n+1)z^{n}$; so the quotient 
	$\mathbb{C}[x,y,z]/(y,x,z^{n})$ is generated by $1,z,z^{2},...,z^{n-1}$  over $\mathbb{C}$. Hence the monomial basis of the quotient is given by
	$$dx\wedge dy \wedge dz,z dx\wedge dy \wedge dz,...,z^{n-1}dx\wedge dy \wedge dz.$$
	The weight of $dx\wedge dy \wedge dz$ is $$\frac{1}{2}+\frac{1}{2}+\frac{1}{n+1}= \frac{n+2}{n+1}.$$ Hence the weight of our forms are 
	$$\frac{n+2}{n+1}, \frac{n+3}{n+1}, ..., \frac{2n}{n+1}.$$
	Let us label these values by $\alpha_{i}$ respectively. For example, $\alpha_{1}=\frac{n+2}{n+1}$ and $\alpha_{2}=\frac{n+3}{n+1}$. By Dimca-Saito([4],Theorem 5.3), 
	$$\text{dim}(N_{p+d}^{2})\leq \# \left\{ k \vert \ \ \alpha_{k}=\frac{p}{d}\right\},$$
	where $N_{j}^{2}$ is the dimension of the subdiagonal on the $E_{2}$ page of degree $j$.
	From above, since the value of $\alpha_{k}$ ranges between $1$ and $2$ for all $k$, there is no way that $\alpha_{k}=\frac{p}{d}$. Hence, $\text{dim}(N_{p+d}^{2})=0$, and so the subdiagonal vanishes on the $E_{2}$ page.
	This extends to hypersurfaces with multiple $A_{n}$ singularities as it was noted that we can focus on one singularity at a time. \\
	Now suppose our hypersurface has a type $D_{n}$ singularity. Then in a local analytic system, our function can be written in the form $z^{2}+yx^{2}+y^{n-1}$. The weights of $x,y,z$ are $\frac{n-2}{2(n-1)}, \frac{1}{n-1},\frac{1}{2}$ respectively.
	The Jacobian ideal is given by $(2z,x^{2}+xy,y^{n-1})$. The quotient $\mathbb{C}[x,y,z]/(2z,x^{2}+xy,y^{n-1})$ is generated by $1,xy^{k},y^{j}$, where $k$ and $j$ run from 0 to $n-2$. The weight of $dx\wedge dy \wedge  dz$ is $\frac{2n-1}{2n-2}$. \\
	Let us consider the basis given by $y^{j} dx \wedge dy \wedge dz$.  This has weight $$\frac{2j}{2(n-1)}+\frac{2n-1}{2n-2}=1+\frac{2j+1}{2n-2},$$ which is never an integer since the numerator is odd and denominator is even.  \\
	Let us now consider the basis given by $xy^{j}dx\wedge dy \wedge dz$. This has weight $$\frac{2j}{2(n-1)}+\frac{2n-1}{2n-2}+\frac{n-2}{2(n-1)}=1+\frac{2j+n-1}{2n-2}.$$
	Now $j$ runs from $0$ to $n-2$. At $0$, the value is between $1$,and $2$, and at $n-2$, the value is between $2$ and $3$. So the only case we need to consider is whether the value can be $2$. However, the value $2$ means $p=2d$ so we are calculating the dimension of $N_{2}^{2d}$ which is not part of the first quadrant. Hence, the subdiagonal vanishes in the case our hypersurface has type $D_{n}$ singularity.\\
	Suppose the hypersurface has an $E_{6}$ singularity. Then there exists a local analytic system where the function of the hypersurface can be written in the form $x^{2}+y^{3}+z^{4}$. The weights of $x,y,z$ are $\frac{1}{2}, \frac{1}{3}, \frac{1}{4}$ respectively. The Jacobian ideal is given by $J=(2x,3y^{2}, 4z^{3})$. The quotient $\mathbb{C}[x,y,z]/(2x,3y^{2}, 4z^{3})$ is generated by $1, y, z, z^{2}. yz, yz^{2}$. The weight of $dx\wedge dy \wedge dz$ is given by $\frac{13}{12}$.  We have 
	\begin{align*}
		&\text{wt}(1dx\wedge dy \wedge dz)= \frac{13}{12}, \text{wt}(y dx\wedge dy \wedge dz)=\frac{17}{12}, \text{wt}(zdx\wedge dy \wedge dz)=\frac{16}{12},\\ &\text{wt}(z^{2}dx\wedge dy \wedge dz)=\frac{19}{12}, \text{wt}(yz dx\wedge dy\wedge dz)=\frac{20}{12}, \text{wt}(yz^{2} dx\wedge dy\wedge dz)=\frac{23}{12}.
	\end{align*}
	None are integers, so the subdiagonal vanishes. \\
	Suppose the hypersurface has an $E_{7}$ singularity. Then there exists a local analytic system where the function of the hypersurface can be written as $x^{2}+y^{3}+yz^{3}=0$. The weights of $x,y,z$ are $\frac{1}{2}, \frac{1}{3}, \frac{2}{9}$ respectively. The Jacobian ideal is given by $J=(2x, 3y^{2}+z^{3}, 3z^{2})$. The quotient  $\mathbb{C}[x,y,z]/(2x, 3y^{2}+z^{3}, 3z^{2}y)$ is generated by $1,y,z,y^{2},yz, z^{2}, y^{2}z$. We have 
	$\text{wt}(dx\wedge dy \wedge dz)= \frac{19}{18}$. Then 
	\begin{align*}
		&\text{wt}(ydx\wedge dy \wedge dz)=\frac{25}{18}, \text{wt}(zdx\ wedge dy \wedge dz)=\frac{23}{18}, \text{wt}(y^{2}dx\wedge dy \wedge dz)=\frac{31}{18},\\ &\text{wt}(yzdx\wedge dy \wedge dz)=\frac{29}{18}, \text{wt}(z^{2}dx\wedge dy \wedge dz)=\frac{23}{18}, \text{wt}(y^{2}z)=\frac{35}{18}. 
	\end{align*}
	Hence, since none are integers, the subdiagonal vanishes.
	Suppose the hypersurface has an $E_{8}$ singularity. Then there exists a local analytic system where the function of the hypersurface can be written as $x^{2}+y^{3}+z^{5}=0$. Then the weights of $x,y,z$ are $\frac{1}{2}, \frac{1}{3}, \frac{1}{5}$ respectively. The Jacobian ideal is given by $J=(2x, 3y^{2},5z^{4})$. The quotient $\mathbb{C}[x,y,z]/(2x, 3y^{2},5z^{4})$ is generated by $1,y,z, yz, z^{2}, z^{2}y, z^{3}, z^{3}y$. We have 
	$\text{wt}(dx\wedge dy \wedge dz)=\frac{31}{30}$. Then 
	\begin{align*}
		&\text{wt}(ydx\wedge dy \wedge dz)=\frac{41}{30}, \text{wt}(zdx\wedge dy\wedge dz)=\frac{37}{30}, \text{wt}(yzdx\wedge dy\wedge dz)= \frac{47}{30},\\ &\text{wt}(z^{2}dx\wedge dy\wedge dz)=\frac{43}{30}, \text{wt}(z^{2}ydx\wedge dy\wedge dz)=\frac{53}{20}, \text{wt}(z^{3}dx\wedge dy \wedge dz)=\frac{49}{30},\\
		&\text{wt}(z^{3}ydx\wedge dy\wedge dz)=\frac{59}{30}.
	\end{align*}
	None are integers so the subdiagonal vanishes. This concludes the proof. \\ \end{proof}  

\textbf{Algorithm for Computing Zeta Function}
\begin{enumerate}
	\item Calculate the basis on the $E_{2}$ page. Along with this, calculate the basis on the subdiagonal of the $E_{1}$ page in the stable range. Compute the basis on higher levels of the subdiagonal as explained in Theorem 5. 
	\item Compute the operators that annihilate the Jacobian ideal. 
	\item For each basis element, compute the image of inverse Frobenius and reduce the image into a linear combination of the basis elements.
	\item Compute the characteristic polynomial to obtain the zeta function. 
\end{enumerate}

There are two ways to find the differential operators. From Proposition \ref{Proposition 3.6}, we know how many of each order of operators we are looking for. Therefore, we can write out the action of all operators to that order and solve a system of equations. If we know the analytic change of coordiantes, we can use Chain Rule to find the operators.  I will provide examples of both methods. \\

\begin{example} Let $f(w,x,y,z)=zwx+w^{2}y+x^{3}-y^{2}x$. The partials are given by 
	\begin{align*}
		f_{w}&=zx+2wy \\ f_{x}&=zw+3x^{2}-y^{2} \\ f_{y}&=w^{2}-2yx \\ f_{z}=&wx.
	\end{align*}
	The singular point $s=[0:0:0:1]$ is of type $A_{4}$. One can check that $\frac{\partial }{\partial y}\vert_{s}$ annihilates the partials.  
	\begin{align*}
		&(\frac{\partial }{\partial y})^{2}(f_{w}h)\vert_{s}=((\frac{\partial }{\partial y})^{2}f_{w})h\vert_{s}=0\\
		&(\frac{\partial }{\partial y})^{2}(f_{x}h)\vert_{s}=((\frac{\partial }{\partial y})^{2}f_{x})h\vert_{s}= -2h(s).
	\end{align*}
	To fix this, we add $$2\frac{\partial }{\partial w}\vert_{s}.$$
	This will annihilate $f_{x}h$. Since this operator annihilates $f_{x}$, we have that 
	$$(\frac{\partial }{\partial y})^{2}+2\frac{\partial }{\partial w}\vert_{s}$$
	annihilates $f_{w}h$ and $f_{x}h$ for all $h$. Similarly, this operator annihilates $f_{y}h$ and $f_{z}h$.  \\ 
	The third order operator is 
	$$(\frac{\partial }{\partial y})^{3}+2\frac{\partial }{\partial w}\frac{\partial }{\partial y}-2\frac{\partial }{\partial x}\vert_{s}.$$ 
	Instead of showing all calculations which doesn't seem too beneficial, let me summarize what is getting fixed. Applying $(\frac{\partial }{\partial y})^{3}\vert_{s}$ to $f_{x}h$ does not annihilate $f_{x}h$. To fix this, we add in $2\frac{\partial }{\partial w}\frac{\partial }{\partial y}\vert_{s}$. This now annihilates $f_{x}h$ but does not annihilate $f_{w}h$. To fix this, we add in $-2\frac{\partial }{\partial x}\vert_{s}$.
\end{example} 
\begin{example} Let $f(w,x,y,z)=wzx+w^{3}+x^{3}-y^{2}x$. The partials are given by 
	\begin{align*}
		f_{w}&=zx+3w^{2} \\ f_{x}&=wz+3x^{2}-y^{2} \\ f_{y}&=-2yx \\ f_{z}&=wx. 
	\end{align*} 
	The singular point is $s=[0:0:0:1]$ is of type $A_{5}$. Instead of showing all the calculations, it is more helpful to explain what doesn't get annihilated and what the fix is. For first order operator, we have that $\frac{\partial }{\partial y}\vert_{s}$ annihilates all partials. \\ 
	
	For second order, we have
	$$(\frac{\partial }{\partial y})^{2}+2\frac{\partial }{\partial w}\vert_{s}.$$ 
	$(\frac{\partial }{\partial y})^{2}\vert_{s}$ does not annihilate $f_{x}h$, so we add in $2\frac{\partial }{\partial w}\vert_{s}$. \\ 
	
	For third order,  we have
	$$(\frac{\partial }{\partial y})^{3}+6\frac{\partial }{\partial y}\frac{\partial }{\partial w}\vert_{s}.$$ 
	$(\frac{\partial }{\partial y})^{3}\vert_{s}$ does not annihilate $f_{x}h$ so we add in $6\frac{\partial }{\partial y}\frac{\partial }{\partial w}\vert_{s}$. \\ 
	
	For fourth order, we have
	$$(\frac{\partial }{\partial y})^{4}+ 2 \binom{4}{2}(\frac{\partial }{\partial y})^{2}\frac{\partial }{\partial w}+4 \binom{4}{2}(\frac{\partial }{\partial w})^{2} - 24 \binom{4}{2} \frac{\partial }{\partial x}\vert_{s}.$$ 
	So $(\frac{\partial }{\partial y})^{4}\vert_{s}$ applied to $f_{x}h$ is not zero. Let us call this the error term. To fix this, applying $2 \binom{4}{2}(\frac{\partial }{\partial y})^{2}\frac{\partial }{\partial w}\vert_{s}$ gives us negative the error term + another term. So adding these two operators gets rid of the error term but we are left with another term. Now to get rid of this other term, we add $4 \binom{4}{2}(\frac{\partial }{\partial w})^{2}\vert_{s}$. This operator now annihilates $f_{x}h$ but in doing so, this operator does not annihilate $f_{w}h$. To fix this, we add in $- 24 \binom{4}{2} \frac{\partial }{\partial x}\vert_{s}$. Now, this operator annihilates any linear combination of the partials.
\end{example}
\begin{example} 
	Let $f(w,x,y,z)=zx^{2}-zwy+w^{2}x-wx^{2}$. This has one A1 singularity at $[0:0:0:1]$ and one A3 singularity at $[0:0:1:0]$. We work locally around the A3 singularity by letting $y=1$. Then let 
	$$g(w,x,z)=f(w,x,1,z)=zx^{2}-zw+w^{2}x-wx^{2},$$
	where $g$ has a singularity at the origin. The partials are given by
	\begin{align*}
		&g_{x}=2zx-2wx\\
		&g_{w}=-z+2wx-x^{2}\\
		&g_{z}=x^{2}-w.
	\end{align*}
	
	Consider the change of coordinates given by 
	\begin{align*}
		&u=-z+wx-x^{2}+x^{3}\\
		&v=w-x^{2}\\
		&t=x\cdot \sqrt[4]{1-x}.
	\end{align*}
	Let us reinterpret the derivative with respect to t in terms of our original coordinates. We have
	
	$$ \frac{\partial}{\partial t}= \frac{\partial x}{\partial t}\cdot  \frac{\partial }{\partial x} + \frac{\partial w}{\partial t}\cdot \frac{\partial}{\partial w} + \frac{\partial z}{\partial t}\cdot \frac{\partial}{\partial z}.$$
	Note that since $t^{4}=x^{4}-x^{5}$, $4t^{3}dt=(4x^{3}-5x^{4})dx$. Therefore, we have 
	$$\frac{\partial x}{\partial t}= \frac{4t^{3}}{4x^{3}-5x^{4}}=\frac{4x^{3}(1-x)^{3/4}}{4x^{3}-5x^{4}}=\frac{4(1-x)^{3/4}}{4-5x}.$$ 
	Thus our expression above is
	
	$$ \frac{\partial}{\partial t}= \frac{4(1-x)^{3/4}}{4-5x} \cdot  \frac{\partial }{\partial x} + \frac{\partial w}{\partial t}\cdot \frac{\partial}{\partial w} + \frac{\partial z}{\partial t}\cdot \frac{\partial}{\partial z}.$$	
	
	We have 
	\begin{align*}
		&\frac{\partial w}{\partial t}=2x\frac{\partial x}{\partial t}\\
		&\frac{\partial z}{\partial t}=x\frac{\partial w}{\partial t}+w\frac{\partial x}{\partial t}-2x\frac{\partial x}{\partial t}+3x^{2}\frac{\partial x}{\partial t}.
	\end{align*}
	
	What about $(\frac{\partial}{\partial t})^{2}$? This is 
	$$\frac{\partial}{\partial t}\frac{\partial}{\partial t}= \frac{\partial}{\partial t}\left(\frac{\partial x}{\partial t} \cdot  \frac{\partial }{\partial x} + \frac{\partial w}{\partial t}\cdot \frac{\partial}{\partial w} + \frac{\partial z}{\partial t}\cdot \frac{\partial}{\partial z}\right)$$
	$$=\frac{\partial}{\partial t}(\frac{\partial x}{\partial t} \cdot  \frac{\partial }{\partial x}) + \frac{\partial}{\partial t}(\frac{\partial w}{\partial t}\cdot \frac{\partial}{\partial w}) + (\frac{\partial}{\partial t}\frac{\partial z}{\partial t}\cdot \frac{\partial}{\partial z}).$$
	
	Let us calculate each of the 3 terms separately. \\
	
	\textbf{1st term} In the first half of the product rule, we want to take the derivative of $x$ with respect to $t$ twice and evaluate at $0$. This is equivalent to 2 times the coefficient of $t^{2}$ in the power series expansion of $x$. Let $s$ denote the origin. From 
	$\frac{\partial x}{\partial t}\vert_{s}=1$ and evaluation at the origin being $0$, the expansion of $x$ is given as
	$$x= (0+t+a_{2}t^{2}+...).$$
	We have that
	$$t^{4}=x^{4}-x^{5}=(t+a_{2}t^{2}+...)^{4}-(t+a_{2}t^{2}+...)^{5}.$$
	The $t^{5}$ coefficient in $x^{4}$ is $4a_{2}$ and the $t^{5}$ coefficient in $x^{5}$ is 1. Thus $a_{2}=\frac{1}{4}$, and so evaluation at $0$ gives $\frac{1}{2}$.\\
	In the second half of the product rule, we have 
	\begin{align*}
		(\frac{\partial}{\partial t} \frac{\partial}{\partial x})\frac{\partial x}{\partial t}\vert_{s}&=\frac{\partial}{\partial t} \frac{\partial}{\partial x}\vert_{s}\\ 
		\frac{\partial}{\partial t} \frac{\partial}{\partial x}\vert_{s}&=(\frac{\partial x}{\partial t}\cdot  \frac{\partial }{\partial x} + \frac{\partial w}{\partial t}\cdot \frac{\partial}{\partial w} + \frac{\partial z}{\partial t}\cdot \frac{\partial}{\partial z})\vert_{s}=(\frac{\partial}{\partial x})^{2}\vert_{s}.
	\end{align*}
	So first term gives $(\frac{\partial}{\partial x})^{2}+\frac{1}{2}\frac{\partial}{\partial x}$.\\
	
	\textbf{2nd term} Using the fact $\frac{\partial w}{\partial t}=2x\frac{\partial x}{\partial t}$,
	\begin{align*} 
		\frac{\partial}{\partial t}\left(\frac{\partial w}{\partial t} \frac{\partial}{\partial w}\right)\vert_{s}&= \left(\frac{\partial}{\partial t}\frac{\partial w}{\partial t}\right)\frac{\partial}{\partial w}\vert_{s} + \frac{\partial w}{\partial t} \left(\frac{\partial}{\partial t}\frac{\partial}{\partial w}\right)\vert_{s}\\
		&=\left(2\frac{\partial}{\partial t} x\right)\frac{\partial x}{\partial t}\frac{\partial }{\partial w}\vert_{s} + 2x \left(\frac{\partial}{\partial t}\frac{\partial x}{\partial t}\right)\frac{\partial }{\partial w}\vert_{s}+\frac{\partial w}{\partial t} \left(\frac{\partial}{\partial t}\frac{\partial}{\partial w}\right)\vert_{s}\\
		&=\left(2\frac{\partial}{\partial t} x\right)\frac{\partial x}{\partial t}\frac{\partial }{\partial w}\vert_{s} + \frac{\partial w}{\partial t} \left(\frac{\partial}{\partial t}\frac{\partial}{\partial w}\right)\vert_{s}\\
		&=2\frac{\partial }{\partial w}+\frac{\partial w}{\partial t} \left(\frac{\partial}{\partial t}\frac{\partial}{\partial w}\right)\vert_{s}=2\frac{\partial }{\partial w}.
	\end{align*}
	
	\textbf{3rd term} 
	\begin{align*}
		\frac{\partial}{\partial t}\left(\frac{\partial z}{\partial t} \frac{\partial}{\partial z}\right)\vert_{s}&= \left(\frac{\partial}{\partial t}\frac{\partial z}{\partial t}\right)\frac{\partial}{\partial z}\vert_{s} + \frac{\partial z}{\partial t} \left(\frac{\partial}{\partial t}\frac{\partial}{\partial z}\right)\vert_{s}\\
		&=\left(\frac{\partial}{\partial t}\frac{\partial z}{\partial t}\right)\frac{\partial}{\partial z}\vert_{s}=\frac{\partial}{\partial t}\frac{\partial z}{\partial t}\vert_{s}=\frac{\partial}{\partial t}\left(x\frac{\partial w}{\partial t}+w\frac{\partial x}{\partial t}-2x\frac{\partial x}{\partial t}+3x^{2}\frac{\partial x}{\partial t}\right)\vert_{s}\\
		&=\frac{\partial}{\partial t}\left(x\frac{\partial w}{\partial t}\right)\vert_{s}+\frac{\partial}{\partial t}\left(w\frac{\partial x}{\partial t}\right)\vert_{s}-\frac{\partial}{\partial t}\left(2x\frac{\partial x}{\partial t}\right)\vert_{s}+\frac{\partial}{\partial t}\left(3x^{2}\frac{\partial x}{\partial t}\right)\vert_{s}\\
		&=-2
	\end{align*}
	
	So 
	$$\left(\frac{\partial}{\partial t}\frac{\partial z}{\partial t}\right)\frac{\partial}{\partial z}\vert_{s}= -2\frac{\partial}{\partial z}$$
	Therefore, our second degree operator is $$(\frac{\partial}{\partial x})^{2}+\frac{1}{2}\frac{\partial}{\partial x}+2\frac{\partial}{\partial w}-2\frac{\partial}{\partial z}.$$ 
	Indeed, applying this operator and evaluating at the origin annihilates all the partial derivatives of $f$.
\end{example}

\subsection{Code and Run time Complexity}
Below is the link to the Sage code. There is a code for computing the basis on the $E_{2}$ page. This is fully automated. The user has to input the degrees of the Koszul map and function $f$. The second code is the image of Frobenius and reduction.
I attached videos in the README file of my code on GitHub and Zenodo. The link is : \url{https://zenodo.org/record/5810714#.Yc36q2jMJyw} \\

Note both Scott Stetson's (see [17]) and my Frobenius reduction code are not fully automated; however, I can say that both of our codes are as close to being fully automated as can be. By this, I mean that we cannot simply input the function $f$ and the prime $p$ and expect the zeta function because one of the necessary inputs for both of our codes involves using the calculation on the $E_{1}$ page. The user of the code does not have that information available at the start. Suppose the user wishes to truncate the Frobenius summation at $k=N$. Then let $M$ be the highest degree of the image of Frobenius. The user first finds a basis of the subdiagonal in the lowest level that is in the stable range. Next, in Scott's case of an ordinary double point, from Lemma of page 27 of Stetson and Baranovsky [17],  we have to find the $x_{0}$ factor to multiply by to obtain a basis on the higher degrees of the subdiagonal. We will then apply de Rham to the basis and solve a system of equations by evaluation at singular points. We will need to do this for every multiple of the degree of $f$ until we reach $M$. In the ADE case, we have to find the $L$ factor from Theorem 2.7 to obain a basis on the higher degrees of the subdigaonal.  We will then apply de Rham to the basis and solve a system of equations by applying our differential operators, which need not be only evaluation at the singular points. \\

Therefore, in Scott's case, the main necessary inputs and work for the code include the following:
\begin{enumerate}
	\item a function $f$
	\item a prime $p$
	\item finding a basis for the $E_{2}$ page
	\item undoing the Koszul differential through a Grobner basis
	\item finding a basis of the subdiagonal for all levels in the stable range
\end{enumerate}

For my case, the necessary inputs and work for the code include the following:
\begin{enumerate}
	\item a function $f$
	\item a prime $p$
	\item finding a basis of the $E_{2}$ page
	\item finding the differential operators from Theorem \ref{Theorem 3.5}.
	\item finding a basis of the subdiagonal for all levels in the stable range
\end{enumerate}

Sage has a function that immediately undoes the Koszul differential assuming the function belongs to the Jacobian. Therefore, long division from the user is not necessary. \\

In terms of computer run time, although the cohomology method is faster than the brute force method, the run time is long from my perspective. To put things in perspective, Scott Stetson used his own code to compute a quintic with prime $p=11$ in 3 months using two computers. The main issue for both codes is stated in top of page 4 and item 4 of both lists above. The Grobner basis for the hypersurfaces can be long and with the image of Frobenius having many terms, long division takes most of the memory and run time. In Scott's case, long division term by term on Mathematica takes even longer as the user has to manually input the long division as opposed to using Sage's lift function.

Each step gives $p$-adic precision of the terms of the inverse Frobenius by a factor of $p$. For example, if going up to $k=10$ in the summation gives accuracy of the entries up to $p^{4}$, going up to $k=11$ in the summation gives accuracy up to $p^{5}$. Here are examples of zeta functions of cubic hypersurfaces with ADE singularities.

\begin{center}
	\begin{tabular}{ c c c c c}
		Function & Singularity & E2 Basis & Zeta Function   \\ \\
		$zx^{2} -zwy + x^{3}$ & 1 A1, 2 A2 & $wy$ & $\frac{1}{(1-T)(1-5T)^{2}(1-25T)}$ \\ \\
		$zx^{2}-zwy+w^{2}x-wx^{2}$ & 1 A1, 1 A3 & $w^{2}, wx$ & $\frac{1}{(1-T)(1-5T)^{3}(1-25T)}$ \\ \\
		$zx^{2}-zwy+wx^{2}$ & 1 A1, 2 A2 & $wx$ & $\frac{1}{(1-T)(1-5T)^{2}(1-25T)}$  \\ \\
		$zx^{2}-zwy+wx^{2}-x^{3}$ & 2 A1, 1 A2 & $w^{2},wy$ & $\frac{1}{(1-T)(1-5T)^{3}(1-25T)}$ \\ \\
		$zwx-yw^{2}-y^{3}-wy^{2}$ & 2 A2 & $w^{2},wy$ & $\frac{1}{(1-T)(1-5T)^{2}(1+5T)(1-25T)}$  \\ \\
		$zwx-y^{3}$ & 3 A2 & no basis & $\frac{1}{(1-T)(1-5T)(1-25T)}$

	\end{tabular}
\end{center}

\begin{example}A more interesting hypersurface is the degree $4$ quartic given by 
	$$f=w^{3}x+(x+y+z)(x-y-z)(x+y+2z)(x-2y+z).$$
	This quartic has an $A_{5}$ singularity at $[0:0:1:-1]$ and $A_{2}$ singularities at $[0:1:0:-1], [0:1:3:-2], [0:5:1:-3], [0:3:2:1], [0:1:-1:0]$. Along with evaluation at the singular points, here are the operators that annihilate the Jacobian ideal. For simplicity of notation, I will only write the differential operators. Keep in mind one has to evaluate at the corresponding singular point after applying the differential operators. For the $A_{5}$ singularity, 
	\begin{align*}
		D_{1}&=\frac{\partial}{\partial w } + \frac{\partial}{\partial y }-\frac{\partial}{\partial z} \\
		D_{2}&=\frac{\partial^{2}}{\partial^{2} w } \\
		D_{3}&=\frac{\partial^{3}}{\partial^{3}w }-\frac{\partial}{\partial x} \\
		D_{4}&=	\frac{\partial^{4}}{\partial^{4} w }-4\frac{\partial}{\partial w }\frac{\partial}{\partial x }+\frac{\partial}{\partial w }\frac{\partial}{\partial y }-\frac{\partial}{\partial w }\frac{\partial}{\partial z }.
	\end{align*}
	For $[0:1:0:-1]$, the operator is
	\[ D_{5}= \frac{\partial}{\partial w }+\frac{\partial}{\partial x }-\frac{\partial}{\partial z }.\]
	For $[0:1:3:-2]$, the operator is
	\[ D_{6}= \frac{\partial}{\partial w }+\frac{\partial}{\partial x }+3\frac{\partial}{\partial y }-2\frac{\partial}{\partial z }.\]
	For $[0:5:1:-3]$, the operator is
	\[ D_{7}= \frac{\partial}{\partial w }+5\frac{\partial}{\partial x }+\frac{\partial}{\partial y }-3\frac{\partial}{\partial z }.\]
	For $[0:3:2:1]$. the operator is
	\[ D_{8}= \frac{\partial}{\partial w }+3\frac{\partial}{\partial x }+2\frac{\partial}{\partial y }+\frac{\partial}{\partial z }.\]
	For $[0:1:-1:0]$, the operator is
	\[ D_{9}=\frac{\partial}{\partial w }+\frac{\partial}{\partial x }-\frac{\partial}{\partial y }.\]
	The zeta function is of degree 6 so we will need point counts up to $\mathbb{F}_{{11}^{6}}$. For the point count of $\mathbb{F}_{{11}^{6}}$, since there are $4$ variables, we will need to brute force point count $11^{24}$ points. Assuming Sage takes a second to input the values, this would take Sage around $1.140015356\cdot 10^{21}$ days to run. Using the code, one can compute that the zeta function is given by $$(1-11x)^4(1+11x)^2.$$ 
	
	The case when $p=13$ is more interesting with complex roots. The interesting part of the zeta function, $Z(x)$, is given by  
	$$ Z(x)= 4826809x^{6}-171366x^{5}-26364x^{4}+1690x^{3}-156x^{2}-6x+1.$$
	
\end{example}

\begin{example}  A similar example is a hypersurface defined by equation 
	$$f=w^{2}y^{2}-xy^{3}+xwz^{2}+w^{2}x^{2}.$$
	
	This quartic has an $A_{7}$ singularity at $[0:0:0:1]$, $A_{3}$ singularity at $[1:0:0:0]$, and $E_{6}$ singularity at $[0:1:0:0]$. The interesting part of the zeta function, $Z(x)$ is of degree $5$. Along with evaluation at the singular points, here are the differential operators that annihilate the Jacobian ideal. 
	
	For the $A_{7}$ singularity, the operators are
	\begin{align*}
		D_{1}&=\frac{\partial}{\partial y} \\
		D_{2}&=\frac{\partial^{2}}{\partial^{2} y}\\
		D_{3}&=\frac{\partial^{3}}{\partial^{3} y}+6\frac{\partial}{\partial w}\\
		D_{4}&=\frac{\partial^{4}}{\partial^{4} y}+24\frac{\partial}{\partial y}\frac{\partial}{\partial w}\\
		D_{5}&=\frac{\partial^{5}}{\partial^{5} y}+60\frac{\partial^{2}}{\partial^{2}y }\frac{\partial}{\partial w}-240\frac{\partial}{\partial x}\\
		D_{6}&=\frac{\partial^{6}}{\partial^{6} y}+120\frac{\partial^{3}}{\partial^{3}y }\frac{\partial}{\partial w}-1440\frac{\partial}{\partial y}\frac{\partial}{\partial x}+360\frac{\partial}{\partial w}\frac{\partial}{\partial w}.
	\end{align*}
	
	For the $A_{3}$ singularity, the operators are
	\begin{align*}
		D_{7}&=\frac{\partial}{\partial z}\\
		D_{8}&=\frac{\partial^{2}}{\partial^{2} z}-\frac{\partial}{\partial x}.	
	\end{align*}
	
	For the $E_{6}$ singularity, the operators are
	\begin{align*}
		D_{9}&=\frac{\partial}{\partial y}\\
		D_{10}&=\frac{\partial}{\partial z}\\
		D_{11}&=\frac{\partial}{\partial y}\frac{\partial}{\partial z}\\
		D_{12}&=\frac{\partial^{2}}{\partial^{2}z }-\frac{\partial}{\partial w}\\
		D_{13}&=\frac{\partial^{2}}{\partial^{2} z}\frac{\partial}{\partial y}-\frac{\partial}{\partial w}\frac{\partial}{\partial y}.
	\end{align*}
	
	For $p=7$, the zeta function is given by 
	$$Z(x)=-16807x^{5} +2401x^{4}+686x^{3} -98x^{2} - 7x + 1.$$
	
	For $p=11$, the zeta function is given by 
	$$Z(x)=-161051x^{5} +14641x^{4}+1694x^{3} -154x^{2} - 11x + 1.$$
	
	The $p=11$ case is more interesting as there are 4 complex eigenvalues and 1 real eigenvalue.
	
\end{example}

\begin{example} In this example, we give a degree 9 zeta function, $Z(x)$, over 4 different primes.  Consider the hypersurface defined by equation
	$$f=-xy^{3}+w^{2}x^{2}+x^{2}z^{2}+w^{2}z^{2}.$$
	This hypersurface has 2 $A_{5}$ singularities at $[1:0:0:0]$ and $[0:0:0:1]$ and $A_{2}$ singularity at $[0:1:0:0]$. Along with evaluation at the singular points, here are the differential operators that annihilate the Jacobian. Again, as a reminder, for simplicity of notation, I will only provide the differential operators without the evaluation symbol. Keep in mind one has to evaluate at the singular points after applying the differential operators. 
	
	By symmetry, aside from evaluating at the singular points after, the operators for both $A_{5}$ singularities are the same. the operators are
	\begin{align*}
		D_{1}&=\frac{\partial}{\partial y}\\
		D_{2}&=\frac{\partial^{2}}{\partial^{2} y }\\
		D_{3}&=\frac{\partial^{3}}{\partial^{3} y}+3\frac{\partial}{\partial x}\\
		D_{4}&=\frac{\partial^{4}}{\partial^{4} y }+12\frac{\partial}{\partial x}\frac{\partial}{\partial y}.
	\end{align*}
	
	For the $A_{2}$ singularity, the operator is given by 
	\[ D_{5}=\frac{\partial}{\partial y}.\]
	
	For $p=7$, the zeta function is given by 
	$$Z(x)=-40353607x^{9} - 7411887x^{8} + 1411788x^{7} + 336140x^{6} + 14406x^{5} - 2058x^{4} - 980x^{3} - 84x^{2} + 9x + 1.$$
	
	For $p=11$, the zeta function is given by
	\begin{align*}
		Z(x)&=-2357947691x^{9} + 214358881x^{8} + 77948684x^{7} - 7086244x^{6} - 966306x^{5} + 87846x^{4} + 5324x^{3}\\
		& \quad{} - 484x^{2} - 11x + 1. \end{align*}
	
	For $p=13$, the zeta function is given by
	\begin{align*}Z(x)&=-10604499373x^{9} + 4329647673x^{8} - 637138788x^{7} + 31188612x^{6} + 1199562x^{5} - 92274x^{4} \\
		& \quad{} -14196x^{3} + 1716x^{2} - 69x + 1. \end{align*}
	
	For $p=17$, the zeta function is given by
	\begin{align*}
		Z(x)&=-118587876497x^{9} + 6975757441x^{8} + 1641354692x^{7} - 96550276x^{6} - 8519142x^{5} + 501126x^{4} \\
		& \quad{} + 19652x^{3} - 1156x^{2} - 17x + 1.\end{align*}
	
	The examples for $p=7$ and $p=13$ has 2 complex roots and 7 real roots, and examples for $p=11$ and $p=17$ has all real roots.
	
\end{example}

\begin{example} In this example, we give a degree 10 zeta function, $Z(x)$, over 2 different primes. Consider the hypersurface defined by equation 
	$$f=y^{4}+x^{2}yw+w^{2}z^{2}+yxz^{2}.$$
	This hypersurface has an $A_{5}$ singularity at $[0:1:0:0]$, a $D_{5}$ singularity at $[1:0:0:0]$, and $A_{1}$ singularity at $[0:0:0:1]$. Here are the operators that annihilate the Jacobian ideal aside from evaluation at the singular points. Since the operator for the $A_{1}$ singularity is simply evaluation, I just need to provide the operators for the $A_{5}$ and $D_{5}$ singularity. 
	
	For the $A_{5}$ singularity, the operators are 
	\begin{align*}
		D_{1}&=\frac{\partial}{\partial z}\\
		D_{2}&=\frac{\partial^{2}}{\partial^{2} z}-2\frac{\partial}{\partial w}\\
		D_{3}&=\frac{\partial^{3}}{\partial^{3} z}-6\frac{\partial}{\partial w}\frac{\partial}{\partial z}\\
		D_{4}&=\frac{\partial^{4}}{\partial^{4}z }-12\frac{\partial^{2}}{\partial^{2}z }\frac{\partial}{\partial w}+12\frac{\partial^{2}}{\partial^{2}w }+48\frac{\partial}{\partial y}.
	\end{align*}
	
	For the $D_{5}$ singularity, the operators are
	\begin{align*}
		D_{5}&=\frac{\partial}{\partial y}\\
		D_{6}&=\frac{\partial}{\partial x}\\
		D_{7}&=\frac{\partial^{2}}{\partial^{2} y}\\
		D_{8}&=\frac{\partial^{3}}{\partial^{3} y}-12\frac{\partial^{2}}{\partial^{2} x}.
	\end{align*}
	
	For $p=7$, the zeta function is given by 
	$$Z(x)=-282475249x^{10} + 5764801x^{8} + 235298x^{6} - 4802x^{4} - 49x^{2} + 1.$$ 
	This zeta function is interesting as there are no odd powers. Furthermore, all roots are of the form $a+bi$ where $a=0$ or $b=0$.
	
	For $p=11$, the zeta function is given by
	\begin{align*}
		Z(x)&=-25937424601x^{10} + 12861532860x^{9}  -2825639795x^{8} + 354312200x^{7} -24157650x^{6} + 199650x^{4}\\ & \quad{} -24200x^{3} + 1595x^{2} - 60x + 1.
	\end{align*}
	
	This zeta function is also interesting as there is no $x^{5}$ term. There are 6 real roots and 4 complex roots.	
\end{example}



\section{Conclusion}\label{sec3}

To conclude, I have generalized Scott's algorithm to point counting of ADE hypersurfaces through the action of Frobenius zeta function. Before Scott and my work, there had not been much progress in finding the zeta functions of singular hypersurfaces. Aside from the brute force point counting approach, even Lauder's deformation method with Picard Fuchs equation may not apply in the singular case. In my work, I have proved an equivalence between the Jacobian ideal and annihilation of differential operators as in Theorem \ref{Theorem 3.5}. In short, for the ADE case, to determine whether a polynomial in the stable range is in the Jacobian, there is no need for a Grobner basis. We simply apply differential operators and sees if the polynomial is annihilated by all of the differential operators. \\ 

While determining whether a polynomial in the stable range is in the Jacobian does not require the use of a Grobner basis, undoing the Koszul differential does. Therefore, the only remaining issue is the lifting of the Koszul differential using the Grobner basis which takes up most of the time for the code. From Theorem \ref{Theorem 3.5}, since belonging to the Jacobian is equivalent to being annihilated by all the differential operators, one  hypothesis is that instead of using a Grobner basis, a lift may be linked to these differential operators in the ADE case. Given polynomial $h$ in the stable range belongs to the Jacobian ideal, the lifts $q_{1},q_{2},q_{3},q_{4}$ such that 
$$h=q_{1}f_{w}+q_{2}f_{x}+q_{3}f_{y}+q_{4}f_{z}$$
may correspond to the functions that are annihilated by some differential operators but not all since being annihilated by all the differential operators means belonging to the Jacobian. \\
 
Aside from run time improvements for the code, we can move on to higher dimensions such as $\mathbb{P}^{4}$. The stable range will be larger; hence, one needs to compute the matrix for Koszul differential and the de Rham differential for lower levels. Furthermore, the subdiagonal may not vanish anymore. A second path is to extend to the other singularities in Arnold's list. In Arnold's classification of hypersurface singularities, along with ADE singularities, there are unimodal singularities. The list of unimodal singularities can be found in Hikami [9]. As the unimodal singularities still have normal forms, the theory of operators still holds. However, one needs to study the blow up of unimodal singularities and see if one can apply the isomorphism between de Rham cohomology and rigid cohomology. Similar to the $\mathbb{P}^{4}$ case, the subdiagonal need not vanish. A harder path would be to consider singularities not in Arnold's list. The definition of a Milnor number still holds there, and since there is no normal form to relate to, one has to consider a different approach as the theory of operators is no longer relevant. 

\section{Acknowledgements} 
I am extremely grateful to my advisor Professor Vladimir Barnaovsky for giving me advice on my research.  This project would not have been possible without Dr. Scott Stetson who provided the Mathematica code for the case of ordinary double points. I would also like to thank Professor Dimca Alexandru and Professor Morihiko Saito for providing me the correct resources on spectral sequences that enabled me to compute the zeta function and basis for $E_{2}$ page. I would also like to thank Hannah Knight and Chao Ming Lin for proofreading my text.

\section{Declarations}
\subsection{Funding and Competing Interests} No funding was received to assist with the preparation of this manuscript. The author has no relevant financial or non-financial interests to disclose.
\subsection{Data Available Statement}
The data sets generated and/or analyzed during the current study are available from the corresponding author on reasonable request.

\end{document}